\newcommand{\newjointcountertheorem}[3]{
	\newaliascnt{#1}{#2}
	\newtheorem{#1}[#1]{#3}
	\aliascntresetthe{#1}	
}
\newtheorem{thm}{Theorem}[section]
\theoremstyle{definition}
\def\Snospace~{\S{}}
\renewcommand{\mathbb}{\mathbf}
\newcommand{\var}{{\mathrm{Var}}}
\def\lra{\longrightarrow }
\def\var{\operatorname{Var}}
\def\eins{^{(1)}}
\def\zwei{^{(2)}}
\def\xnk{\frac{k!\,(n-k)!}{n!}\,[x^{k}] \;}
\def\tno{\tfrac{k!\,(n-k)!}{n!}\,[ t^k]\;}
\def\tn2{\tfrac{k!}{n^k}\,[ t^k]\;}
\def\ci{\cite}
\def\lra{\longrightarrow}
\def\PP{\mathbb{P}}
\def\EE{\mathbb{E}}
\numberwithin{equation}{section}
\begin{document}

	\title[{On (multi-) collision times}]{On (Multi)-Collision Times}

	\author{Ernst Schulte-Geers}
	\address{Bundesamt f\"ur Sicherheit in der Informationstechnik, Godesberger Allee 185--189, 53175 Bonn, Germany}
	\email{ernst.schulte-geers@bsi.bund.de}
	
\keywords{collisions, repetitions, birthday problem, generic collision attacks}
	\date{\today}

\begin{abstract}
We study the (random) waiting time for the appearance of the first (multi-)collision in a drawing process in detail. 
The results have direct implications for the assessment of generic (multi-)collision search in cryptographic hash functions.
\end{abstract}
	\maketitle

	\bibliographystyle{amsplain}

\section{Introduction}
A $(n,m)-$ function is a function $h:\mathcal{D} \lra \mathcal{R}$ where $|{\mathcal D}|=n$ and $|{\mathcal R}|=m$ are finite sets.\\
An $r$-fold multi-collision (short: $r$-collision) of $h$ is an $r-$element set $\{d_1,\ldots,d_r\}$ of (mutually distinct) domain points
s.th. $h(d_1)=h(d_2)=\ldots =h(d_r)$, 2-collisions are called collisions.\\
In particular, for cryptographic hash functions $h$ the difficulty of statistical (multi-) collision search for $h$ is of interest : how difficult is it to find a point 
$y\in \mathcal{R}$ and two  (resp. $r$) different $h$-preimages of  $y$?.\\
In a generic statistical (multi-) collision attack on a hash function $h$ an attacker produces randomly  hash values (i.e. he produces randomly preimages of $h$ and maps them through $h$) until he has found the first (multi-) collision.\\
How many hash values must be produced to find a (multi-)collision? \\
This number is a random variable - the waiting time $K_r$ (resp. $R_r$) for the first (multi-) collision (resp. repetition) - and the distribution of this ``collision time" describes statistically the effort
needed for the (multi-)collision search.\\
The main questions of interest are
\begin{enumerate}
\item  what is  the average effort for the attack (the expectation of the collision time)?
\item what is the typical effort for the attack (the distribution of the collision time)?
\end{enumerate}
Cryptologic ``folklore" states that this question reduces to the classical birthday phenomenon in the codomain, and that  
the ``birthday effort"- i.e. the trial of magnitude $\sqrt{m}$ (resp. $m^{{(r-1)}/r}$ preimages) - is needed to find a (multi-)collision.\\[0.1cm]
The folklore result relies on the underlying assumption that the mapping $h$ behaves as a (uniform) random mapping (the individual domain points ``pick"
their image uniformly at random, indepently of the other domain points).\\[0.1cm]
It is clear - but frequently not explicitly stated - that this is an approximation.\\[0.1cm]
This ``random mapping" approximation
was (for the collision case) questioned by Bellare and Kohno (\cite{BK}), who pointed out that the behaviour of a fixed hash function could possibly deviate strongly from random.
Their standpoint: for a given hash function one should try to quantify resistance to generic collision attacks using a balance measure 
(rather than to assume random mapping behaviour a priori). Similar balance measures for multi-collisions were proposed and investigated by Ramanna and Sarkar in \cite{RS}. These papers 
also provide ``rough" statistical answers to 1. and 2. However, despite of the interest in generic attacks there is apparently a lack of rigorous results so far.\\[0.1cm]
The object of this paper is to close this gap. We prove precise statistical assertions to questions (1) and (2) and give a survey of exact results on the subject.\\[0.1cm]
In the sequel we consider both commonly used models for the mapping $h$: 
\\ (a) the mapping $h$ is fixed  (b) the mapping $h$ is chosen at random.\\
Further we treat both, sampling with replacement and sampling without replacement.

\section{Previous and related work}
Statistical collision search may be thought of as a generalised ``birthday problem". (The exact relationship to
the classical birthday problem is explained in \autoref{ssec:stochmod} below.)
\subsection{ The classical birthday problem }
Let $r\geq 2$. The classical $r$-fold birthday problem deals with the following question:\\
{\em $k$ balls are distributed at random into $m$ cells. How big is the chance that
no cell contains more than $r-1$ balls?}\\
The classical answer, due to von Mises \cite{vonMises}, is as follows:\\[0.1cm]
For $j\in\{0,\ldots,k\}$ let  $U_j^{k,m}$ be the random variable ``number of cells containing exactly $j$ balls".

Then the following theorem holds:
\begin{thm}\label{thmvM}
{\bf (von Mises)}\\ 
Let $j\in\{0,\ldots,k\}$ be fixed and $\alpha=\frac{k}{m}$. Then\\[0.1cm]
(a)  $\EE(U_j^{k,m})=m{k \choose j}(\frac{1}{m})^j(1-\frac{1}{m})^{k-j}\;,$\\[0.1cm]
(b) If $k,m\lra \infty$ s.th. $m\frac{\alpha^j}{j!} e^{-\alpha}$ tends to a finite positive limit $a_j$, then  asymptotically $U_j^{k,m}$ is Poisson distributed with parameter
$a_j$.\end{thm}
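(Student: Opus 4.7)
Part (a) is immediate from linearity. Writing
\[
U_j^{k,m}=\sum_{i=1}^m \mathbf{1}\{\text{cell } i \text{ contains exactly } j \text{ balls}\},
\]
and noting that the number of balls in cell $i$ is Binomial$(k,1/m)$, the probability of each indicator equals $\binom{k}{j}(1/m)^j(1-1/m)^{k-j}$, and summing gives the stated formula.

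For (b) my plan is the classical method of factorial moments. A Poisson$(a_j)$ random variable has $r$-th descending factorial moment $a_j^r$, and since the Poisson law is determined by its moments, it suffices to show $\EE[(U_j^{k,m})_r]\to a_j^r$ for every fixed $r\geq 1$. Expanding the descending factorial as a sum over ordered $r$-tuples of distinct cell-indices and using the multinomial distribution, the probability that $r$ prescribed cells each contain exactly $j$ balls (the remaining $k-rj$ balls falling into the other $m-r$ cells) equals
\[
\frac{k!}{(j!)^r(k-rj)!}\,m^{-rj}\bigl(1-\tfrac{r}{m}\bigr)^{k-rj}.
\]
Multiplying by the $(m)_r$ tuples and writing the resulting closed form in the suggestive factorisation with $\alpha=k/m$, one obtains
\[
\EE[(U_j^{k,m})_r]=\Bigl(\frac{m\alpha^j}{j!}\Bigr)^r\cdot\frac{(m)_r}{m^r}\cdot\frac{(k)_{rj}}{k^{rj}}\cdot\bigl(1-\tfrac{r}{m}\bigr)^{k-rj}.
\]
Under the hypothesis of (b), the first factor tends to $\bigl(a_j e^{\alpha}\bigr)^r\cdot e^{-r\alpha}=a_j^r$ once paired with the last factor, while the middle two standard finite-product ratios tend to $1$. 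Putting these pieces together yields $\EE[(U_j^{k,m})_r]\to a_j^r$.

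The main obstacle is the delicate nature of the limiting regime: the condition $m\alpha^j e^{-\alpha}/j!\to a_j\in(0,\infty)$ does not force $\alpha=k/m$ to have a positive finite limit. Depending on $j$ and $a_j$, one may have $\alpha\to 0$ (the sparse regime, $k\ll m$) or $\alpha\to\infty$ (the dense regime, $k\gg m$), with $\alpha$ growing only logarithmically in $m$ in the latter case. The elementary limits $(m)_r/m^r\to 1$, $(k)_{rj}/k^{rj}\to 1$, and $(1-r/m)^{k-rj}\cdot e^{r\alpha}\to 1$ must therefore be verified in both sub-regimes; each reduces to checking that $k=o(m^2)$ and that the second-order Taylor remainder in $\log(1-r/m)$ is negligible after multiplication by $k-rj$. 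This bookkeeping is routine but has to be carried out rather than waved away, and is where care is needed.
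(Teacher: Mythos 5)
Your proof is correct, but note that the paper itself offers no proof of this theorem: it is quoted as a classical result of von Mises, and the paper's own argument begins only with the corollary that follows it. So there is nothing to compare against except the classical literature, where your route --- factorial moments plus moment-determinacy of the Poisson law --- is indeed the standard one. Part (a) and the closed form for $\EE[(U_j^{k,m})_r]$ are right. The one piece you flag but do not carry out, namely that the hypothesis forces $k=o(m^2)$, does go through: if $\alpha=k/m$ stayed in a compact subset of $(0,\infty)$ along a subsequence then $m\alpha^je^{-\alpha}/j!\to\infty$, so either $\alpha\to\infty$, in which case $m\alpha^je^{-\alpha}\to a_j$ forces $\alpha=\log m+j\log\log m+O(1)$ and hence $k=O(m\log m)$, or $\alpha\to 0$, in which case $k=m\alpha\le m$; in both sub-regimes $k=o(m^2)$, so $(k-rj)\log(1-r/m)=-r\alpha+O(1/m)+O(k/m^2)\to -r\alpha$ up to vanishing terms, and the three auxiliary limits you list all hold (the case $j\ge 1$ with $\alpha\to 0$ additionally needs $k\to\infty$, which is part of the hypothesis and rules out $j=1$ in that sub-regime, but causes no harm). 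With that bookkeeping supplied, $\EE[(U_j^{k,m})_r]\to a_j^r$ for each fixed $r$ and the Poisson limit follows; the argument is complete.
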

The random variable $S_r^{k,m}:=\sum_{i=r}^n {i \choose r}\,U_i^{k,m}$ gives the number of $r-$collisions. Using $1.1.(a)$ we find that
$\EE(S_r^{k,m})=\frac{1}{m^{r-1}}{k \choose r}$. 

 If we regard $k$ as a time variable 
(i.e.: consider the ``cell occupancy process", where the balls are distributed one after another into
the cells) the first $r$-collisions will thus appear at times of order $t_{r,m}:=(r!\,m^{r-1})^{1/r}$./
More precisely:  define the appearance time $T_{r,m}$ of the the first $r$-collision by 
$$\{T_{r,m}>k\}=\{S_r^{k,m}=0\}$$ Then theorem \ref{thmvM} has the following corollary:

\begin{cor}\label{corvM} Let $r\geq 2$ be fixed and $x>0$. Then\\[0.1cm]
$\PP(T_{r,m}/t_{r,m}> x)\lra e^{-x^r}\;\;\;(m\lra \infty) $
\end{cor}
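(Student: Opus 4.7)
The plan is to fix $x>0$, set $k_m := \lfloor x\,t_{r,m}\rfloor$ (so that $k_m/t_{r,m}\lra x$), and rewrite the target limit as $\PP(S_r^{k_m,m}=0)\lra e^{-x^r}$. The strategy is to reduce $S_r^{k_m,m}=\sum_{i\geq r}\binom{i}{r}U_i^{k_m,m}$ to its leading term $U_r^{k_m,m}$ and apply \autoref{thmvM}(b).

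First I would verify the Poisson regime at the critical level $j=r$. Since $t_{r,m}=(r!\,m^{r-1})^{1/r}$, one has $\alpha_m := k_m/m \sim x\,(r!)^{1/r}\,m^{-1/r} \lra 0$, so $e^{-\alpha_m}\lra 1$ and
$$m\,\frac{\alpha_m^r}{r!}\,e^{-\alpha_m} \;\sim\; \frac{k_m^r}{r!\,m^{r-1}} \;\lra\; x^r.$$
Thus \autoref{thmvM}(b) applies with $a_r = x^r$ and yields $U_r^{k_m,m}\lrad \mathrm{Poisson}(x^r)$.

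The main technical step is to show that the tail $V_m := \sum_{i>r}\binom{i}{r}U_i^{k_m,m}$ is negligible in probability. Using the classical identity $\binom{i}{r}\binom{k}{i}=\binom{k}{r}\binom{k-r}{i-r}$, together with \autoref{thmvM}(a) and the binomial theorem, the sum telescopes:
$$\EE(V_m) \;=\; \binom{k_m}{r}\,m^{1-r}\Big[1-\big(1-\tfrac{1}{m}\big)^{k_m-r}\Big] \;\sim\; x^r\,\alpha_m \;\lra\; 0.$$
Markov's inequality then gives $V_m \to 0$ in probability, and Slutsky's lemma yields $S_r^{k_m,m} = U_r^{k_m,m} + o_P(1) \lrad \mathrm{Poisson}(x^r)$.

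Finally, because convergence in distribution to an integer-valued limit entails convergence of the point masses at every atom, in particular
$$\PP(T_{r,m}>k_m) \;=\; \PP(S_r^{k_m,m}=0) \;\lra\; e^{-x^r},$$
which, together with $k_m/t_{r,m}\lra x$, is the claim. The only delicate bookkeeping is the telescoping of $\EE(V_m)$ via the binomial identity; everything else is a direct application of \autoref{thmvM}.
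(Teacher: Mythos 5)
Your proposal is correct and follows essentially the same route as the paper: it reduces $S_r^{k,m}$ to $U_r^{k,m}$ by showing $\EE\bigl(S_r^{k,m}-U_r^{k,m}\bigr)=m^{1-r}\binom{k}{r}\bigl[1-(1-\tfrac{1}{m})^{k-r}\bigr]\lra 0$ (your $\EE(V_m)$ computation is exactly this quantity) and then invokes \autoref{thmvM}(b) with $a_r=x^r$. The only difference is that you spell out the Markov/Slutsky and atom-convergence steps that the paper leaves implicit.
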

\begin{proof}

 Let $x>0$  and in the sequel $k=k(x,m):=\lceil x\,t_{r,m}\rceil$ and $m\lra \infty$. Then $0\leq S_r^{k,m} -U_r^{k,m}\lra 0$ 
(since $\EE(S_r^{k,m}-U_r^{k,m})=\frac{1}{m^{r-1}}{k \choose r}\left(1-(1-\frac{1}{m})^{k-r}\right)\lra 0$). Thus $S_r^{k,m}$ and $U_r^{k,m}$ have the same limit distribution. 
The conditions of \ref{thmvM},(b) are fulfilled with $a_r=x^r$. Thus
$\PP(T_{r,m}>k(x,m))=\PP(S_r^{k(x,m),m}=0)\lra e^{-x^r}$ 
\end{proof}
The limiting distribution is a Weibull distribution with form parameter $r$. In particular, the case $r=2$ (the Weibull distributions with $r=2$ are 
also called Rayleigh distributions) of this corollary is well known.
\subsection{ A (very incomplete) guide to the literature} $ $\\
These classical results have been generalised and refined in many ways, and
there is an abundance of literature on the classical birthday problem and its generalisations.\\
It is impossible to give a complete survey, we just point to some references. 
Modern accounts were e.g. given by  Holst \cite{Ho,Ho2} and by
Camarri and  Pitman \cite{CP},\cite{Ca}. 
 Diaconis and Mosteller gave in \cite{DM} in conversational style a nice general discussion of studying ``coincidences". These works contain numerous references.\\ [0.1cm]
Similar questions are frequently studied and interpreted in the field of
``occupancy statistics", or using ``urn models".\\ 
In classical occupancy statistics one deals with the following situation:\\
$k$ distinguishable particles can be independently of each other in one of  $m$ states, there is no restriction on the number of
particles in a state, each allocation of the particles into the cells is equiprobable.\\
 It is clear from the above that the limit theorem for the r-repetition times are
simple consequences of Poisson limits for certain occupancy functionals. Therefore such limit theorems are  frequently implicit in certain poisson limit theorems in occupancy
statistics. 
The book \cite{Ko} by Kolchin, Sevastyanov and Chistyakov is authoritative 
on occupancy statistics (allocation of particles into cells).\\[0.1cm] 
For the application of urn models the book \cite{JK} by Johnson and Kotz is a comprehensive source.\\[0.1cm] 
Surprisingly, the special question of ``collision" times (as opposed to ``repetition" times) apparently has to the best of the authors knowledge not been dealt with directly in the statistical literature so far.\\[0.1cm]
There is a considerable amount of cryptographic literature on collision attacks. 
``Collision attacks" in their diverse forms (e.g. ``classical", ``meet in the middle" etc.) are a basic cryptanalytic tool and appeared already in the beginning of public
research on cryptography (e.g \cite{Co},\cite{Gir}). Also, from the beginning of hash function design, ``collision resistance" has been a primary goal.
However,  predictions for the effort of such attacks invariably used the ``random mapping assumption" (aka ``random oracle model").\\ A treatment of the classical birthday problem
for $r=2$ is given in many textbooks.\\
 Stinson\cite{Stin} analysed preimage-, 2nd -preimage and collision attacks in the random oracle model (using ``drawing without replacement"), and reductions between the diverse attacks.\\
Criticism of the random mapping assumption  was formulated  only relatively recently.\\
Bellare and Kohno\cite{BK}  were the first to point out that the random mapping assumption needs
justification. They discussed collision resistance for the cases of a fixed/ uniform random mapping and drawing with replacement (the model is explained in detail below), introduced a balance measure (essentially a variant
of the $\chi^2$-statistic, as we shall see below) and were mainly interested in bounds for the probabilities $\PP(K_2\leq k)$ of the collision time $K_2$ in terms of 
their balance measure. (They also treat bounds for the expectation of $K_2$ in an appendix). However, their bounds are not particularly tight. Much better bounds 
were obtained by Wiener \cite{Wi}, who considered ``drawing without replacement" and  treated the cases of uniform and ``multinomial" random functions (explained below)
as well as concrete functions.\\
Laccetti and Schmid \cite{LS} generalised Stinson's work to concrete mappings, gave exact expressions for the success probabilities of suitably randomised algorithms, and used
the theory of majorization to characterise the behaviour of the success probabilities in terms of the uniformity of the mapping.\\
Multi-collision attacks were considered
less often.\\
Preneel in his Ph.D. dissertation \cite{Pr} discussed the classical r-fold birthday problem, and essentially rederived independently some of von Mises' results. This case was
once more analysed by Suzuki, Tonien, Kurosawa and Toyota \cite{ST}, who showed that the $m^{(r-1)/r}$ effort is correct only for small fixed $r$ and that asymptotically
$\PP(R_r\leq (r!)^{1/r}m^{(r-1)/r})\geq \frac{1}{2}$ if $(r!/n)^{1/r}\approx 0$.\\
Nandi and  Stinson \cite{SN}
derive the effort for $r-$ collision attacks on uniform random mappings from an approximation given in \cite{DM}.\\
From the  practical side, Joux's\cite{Jo} spectacular multi-collision attack on cascaded constructions of iterated hash functions demonstrated
highly non-random behaviour in a class of hash-function constructions. This attack has been generalised in several ways.\\ 
 Finally, Ramanna and Sarkar \cite{RS} generalised Bellare and Kohno's 
work to the case of multi-collisions, along with some major improvements.\\
In all fairness it must be stated that almost all of these works plainly ignore most of the considerable statistical literature on the subject.
\section{Contributions of this work}
In contrast to the above mentioned works we give a treatment (apparently the first) of the collision times via generating functions and integral representations.
We consider all cases of interest (i.e. drawing with resp. without replacement and concrete as well as random mappings) and  demonstrate that using the classical apparatus
of occupancy statistics one can easily answer all questions of interest in a very precise manner. We obtain sharp estimates for the expectation of the collision resp.
repetition times, and for the first time obtain (under natural conditions) limit theorems for their distributions. \\
The results provide precise conditions under which the cryptographic folklore-beliefs are valid.

\section{Stochastic Model and First Orientation}
\subsection{The stochastic model}\label{ssec:stochmod}
Since the structure of the sets ${\mathcal D}$ resp. ${\mathcal R}$ is irrelevant for the generic collision search
we let w.l.o.g in the sequel ${\mathcal D}=\{1,\ldots,n\}$ and ${\mathcal R}=\{1,\ldots,m\}$.\\

The random production of preimages of $h$ can obviously be interpreted as ``drawing" from an urn which
contains resp. $x_i=|h^{-1}(\{i\})|$ balls of ``colour" $i$.\\

Consider the following situation:\begin{itemize}{ 
\item  an urn contains $n$ distinguishable balls of $m$ different colours,
namely $x_i$ (distinguishable) balls of colour $i$, $x_1+x_2+\ldots +x_m=n$
\item
balls are drawn (1) without replacement (2) with replacement from this urn, each draw costing
one time unit
\item in case (1) after each draw the next ball is chosen with uniform probability among the remaining balls, in case (2)
the next ball is chosen with uniform probability among all balls.}
\item the sampling is continued until the the random time point $K_r$ resp. $R_r$, at which for the first time $r$ different (resp.
$r$) balls of the same colour have been drawn
\end{itemize}
In the case of sampling with replacement it is necessary to distinguish between ``repetitions" (an image is hit $r$ times, but possibly from a repeated
 preimage)  and (true) multi-collisions.
\subsubsection*{Note on use of the word ``collision"}
In standard use of language a  second appearance during a drawing process (a duplicate) is interchangeably referred to a as a ``match", a ``coincidence", a ``collision" or a ``repetition".
Here we use ``collision" exclusively for the appearance of a ``true" collision, i.e. two (resp. $r$ pairwise) different domain points with the same image point,
and use ``repetition" for the second (resp. $r-$th) appearance of an image point.
\subsubsection*{Relation to the classical $r$-birthday problem}
It is well known  that the classical $r$-birthday problem may be formulated as an urn problem:\\ $k$ balls are drawn with replacement from an urn
containing $m$ different balls. How big is the chance that no ball is drawn more than $r-1$ times?\\
Thus case (2) with $n=m$, $x_1=\ldots=x_m=1$ - in the sequel called ``the classical case" - covers the classical $r$-birthday problem.
In this sense the repetition problems considered here are generalisations of the classical $r$-birthday problem. Note however that
in the classical setting no collisions (in the sense above)  are possible. 
\subsection{Generating functions for the occupancy numbers at time $k$}
\subsubsection{Fixed configuration}
Let us call the set $(x_1,\ldots,x_m)$ the ``configuration" of $h$ and let $Y_1(k),\ldots,Y_m(k)$ be the random variables  $Y_i(k):=$ ``number of {\bf different} balls of colour $i$" that have been drawn
(at time $k$).\\  Let further $Z_1(k),\ldots,Z_m(k)$ be the random variables  $Z_i(k):=$ ``number of balls of colour  $i$" that have been drawn (at time $k$).\\(In case 1  
 $Z_i(k)$  coincides with $Y_i(k)$ as only different balls are drawn when drawing without replacement).\\
The generating function for these variables can be derived by direct combinatorial arguments.\\
\begin{thm}
(1) In case 1 (drawing without replacement) the generating function of (the joint distribution of) 
$Y_1\eins(k),\ldots,Y_m\eins (k)$ is given by:
\begin{equation}
\EE\, t_1^{Y_1\eins(k)}\ldots t_m^{Y_m\eins(k)}=\tno (1+t\,t_1)^{x_1}\cdot\ldots\cdot(1+t\,t_m)^{x_m}
\end{equation}
(2a) In case 2 (drawing with replacement) the generating function of (the joint distribution of) 
$Y_1\zwei(k),\ldots,Y_m\zwei (k)$ is given by:
\begin{equation}
\EE\, t_1^{Y_1\zwei(k)}\ldots t_l^{Y_m\zwei(k)}=\tn2 (1+(e^t-1)\,t_m)^{x_1}\cdot\ldots\cdot(1+(e^t-1)\,t_m)^{x_m}
\end{equation}
(2b) In case 2 (drawing with replacement) the generating function of (the joint distribution of) 
$Z_1(k),\ldots,Z_m(k)$ is given by:
\begin{equation}
\EE\, t_1^{Z_1(k)}\ldots t_m^{Z_m(k)}=\tn2 e^{t t_1 x_1}\cdot\ldots e^{t t_m x_m}
\end{equation}
\end{thm}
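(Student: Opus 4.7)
The plan is to derive all three identities by combining a direct probabilistic description of the sample with the extraction of a coefficient in an auxiliary variable $t$ to encode the size-$k$ constraint.

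For (1), the vector $(Y_1\eins(k),\ldots,Y_m\eins(k))$ has the multivariate hypergeometric distribution
\[\PP(Y_i\eins(k)=y_i\text{ for all }i)=\binom{n}{k}^{-1}\prod_{i=1}^m\binom{x_i}{y_i},\qquad \sum_i y_i=k.\]
Multiplying by $\prod_i t_i^{y_i}$, summing over admissible $(y_i)$, and using the polynomial identity
\[\prod_i(1+t\,t_i)^{x_i}=\sum_{(y_i)}\Bigl(\prod_i\binom{x_i}{y_i}\Bigr)\,t^{\sum y_i}\prod_i t_i^{y_i},\]
I would note that the constraint $\sum y_i=k$ is imposed by the coefficient extraction $[t^k]$; dividing by $\binom{n}{k}$ supplies the prefactor $\tno$ and delivers (1).

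For (2a) and (2b) I would internally label the balls so that, conditionally on $k$ draws with replacement, the multiplicity vector $(N_{ij})_{i\le m,\,j\le x_i}$ is multinomial with total $k$ and uniform cell probabilities $1/n$. Writing $Y_i\zwei(k)=\sum_j\mathbf{1}[N_{ij}\ge 1]$ and $Z_i(k)=\sum_jN_{ij}$, the multinomial weights give
\begin{align*}
\EE\prod_i t_i^{Y_i\zwei(k)}&=\frac{k!}{n^k}\sum_{\sum n_{ij}=k}\prod_{i,j}\frac{t_i^{\mathbf{1}[n_{ij}\ge 1]}}{n_{ij}!},\\
\EE\prod_i t_i^{Z_i(k)}&=\frac{k!}{n^k}\sum_{\sum n_{ij}=k}\prod_{i,j}\frac{t_i^{n_{ij}}}{n_{ij}!}.
\end{align*}
The constraint $\sum n_{ij}=k$ is then removed by rewriting each right-hand side as $\frac{k!}{n^k}[t^k]$ of an unconstrained product of exponential generating functions; per ball of colour $i$ these are
\[\sum_{n\ge 0}\frac{t^n}{n!}\,t_i^{\mathbf{1}[n\ge 1]}=1+(e^t-1)t_i\qquad\text{and}\qquad\sum_{n\ge 0}\frac{(tt_i)^n}{n!}=e^{tt_i},\]
so that forming the $x_i$-fold product over balls and the $m$-fold product over colours produces the right-hand sides of (2a) and (2b).

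The work is essentially bookkeeping once the multinomial decomposition is in place; the only piece of machinery is the classical exponential-generating-function trick that turns a multinomial sum constraint into a coefficient extraction (\emph{a product of EGFs is the EGF of the labelled product}), so I do not foresee a genuine obstacle.
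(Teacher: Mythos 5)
Your proposal is correct and follows essentially the same route as the paper: part (1) rests on the same multivariate hypergeometric identification, and parts (2a), (2b) rest on the same exponential-generating-function coefficient extraction, the only cosmetic difference being that you track per-ball multiplicities $(N_{ij})$ directly where the paper first writes the law of $(Y_1\zwei(k),\ldots,Y_m\zwei(k))$ via the surjection numbers $Sur(k,r)=k!\,[t^k](e^t-1)^r$ --- the same $e^t-1$ fact in slightly different packaging. No gap.
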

\begin{proof}
(1) it is easily argued that 
\[\PP(Y_1\eins(k)=j_1,\ldots,Y_m\eins(k)=j_m)=\frac{{x_1 \choose j_1}\cdots {x_m \choose j_m}}{{n \choose k}}\]
(i.e. the joint distribution is the $m$-dim. hypergeometric distribution

with parameters $n,k$ and $x_1,\ldots,x_m$). The generating function follows.\\[0.1cm]
(2a) we have 
\[\PP(Y_1\zwei(k)=j_1,\ldots,Y_m\zwei(k)=j_m)=\frac{1}{n^k}\cdot {x_1 \choose j_1}\cdots {x_m \choose j_m}\cdot Sur(k,j_1+\ldots+j_m)\]
where $Sur(k,r)$ denotes the number of surjective mappings from $\{1,\ldots,k\}$ onto $\{1,\ldots,r\}$.\\ It is known that
$Sur(k,r)=k!\,[t^k]\,(e^t-1)^r$ (since a such a surjective mapping corresponds uniquely to an ordered partition of
$\{1,\ldots,k\}$ in $r$ into non-empty subsets, and  $e^t-1$ is the exponential generating function for non-empty sets). The generating
function follows. \\[0.1cm]

(2b) the generating function follows from the  fact that
\[\PP(Z_1(k)=j_1,\ldots,Z_m(k)=j_m)=\frac{1}{n^k}\cdot {k \choose j_1,\ldots,j_m} x_1^{j_1}\cdots x_m^{j_m}\]
(i.e. the joint distribution of $(Z_1(k),\ldots,Z_m(k))$ is the multinomial distribution with parameters $k$ and $p_1:=\frac{x_1}{n},\ldots, p_m:=\frac{x_m}{n}$.)
\end{proof}

\subsubsection{Random configuration}
Now let us now consider the following two-stage random experiment:\
\begin{enumerate}
\item in the first stage the urn is filled at random with the balls of different colours
\item in the second stage the balls are drawn from the urn
\end{enumerate}
i.e. the numbers $x_i$ are realisations of random variables $X_i$, where $X_1+\ldots X_m=n$, and conditional
on $X_1=x_1,\ldots,X_m=x_m$ the situation above is given.\\[0.1cm]
Let  $g_{(X_1,\ldots,X_m)}(t_1,\ldots,t_m):=\EE\,t_1^{X_1}\ldots t_m^{X_m}$ denote the generating function of
$(X_1,\ldots,X_m)$ and let $Y_1(k),\ldots,Y_m(k)$ and 
$Z_1(k),\ldots,Z_m(k)$ have the same meaning as above, but for the two-stage experiment..
Then the following is immediate:
\begin{cor} If the urn is filled with a random configuration $(X_1,\ldots,X_m)$ the following
statements hold\\
(1) In case 1 (drawing without replacement) the generating function of (the joint distribution of) 
$Y_1\eins(k),\ldots,Y_m\eins (k)$ is given by:
\begin{equation}
\EE\, t_1^{Y_1\eins(k)}\ldots t_m^{Y_m\eins(k)}=\tno g_{(X_1,\ldots,X_m)}(1+t\,t_1,\ldots,1+t\,t_m)
\end{equation}
(2a) In case 2 (drawing with replacement) the generating function of (the joint distribution of) 
$Y_1\zwei(k),\ldots,Y_m\zwei (k)$ is given by:
\[\EE\, t_1^{Y_1\zwei (k)}\ldots t_m^{Y_m\zwei (k)}=\tn2 g_{(X_1,\ldots,X_m)}(1+(e^t-1)\,t_1,\ldots,1+(e^t-1)\,t_m)\]
(2b) In case 2 (drawing with replacement) the generating function of (the joint distribution of) 
$Z_1(k),\ldots,Z_m(k)$ is given by:
\[\EE\, t_1^{Z_1(k)}\ldots t_m^{Z_m(k)}=\tn2 g_{(X_1,\ldots,X_m)}(e^{tt_1},\ldots,e^{tt_m})\]
\end{cor}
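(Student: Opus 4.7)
The plan is to reduce each of the three claims to the corresponding statement of the previous theorem by conditioning on the configuration and then averaging. Concretely, let $F(t; x_1,\ldots,x_m)$ denote the right-hand side in the fixed-configuration theorem (so $F = \tno (1+tt_1)^{x_1}\cdots (1+tt_m)^{x_m}$ in case 1, and the analogous products in cases 2a, 2b). By the theorem, for each realisation $(X_1,\ldots,X_m)=(x_1,\ldots,x_m)$, the conditional generating function equals $F(t;x_1,\ldots,x_m)$; the tower property then gives
\begin{equation*}
\EE\, t_1^{Y_1(k)}\cdots t_m^{Y_m(k)} \;=\; \EE\bigl[\,\EE[t_1^{Y_1(k)}\cdots t_m^{Y_m(k)}\mid X_1,\ldots,X_m]\bigr] \;=\; \EE\bigl[F(t;X_1,\ldots,X_m)\bigr],
\end{equation*}
and analogously in cases 2a, 2b.

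The next step is to interchange the outer expectation with the coefficient-extraction operator $\tno$ (or $\tn2$) that sits in front of the product. This is legitimate because $[t^k]$ is a linear functional on polynomials, the exponents $X_i$ are bounded by $n$ (so after conditioning the inner expression is a polynomial of bounded degree), and the expectation is a finite sum weighted by the probabilities of the finitely many configurations. Pulling the expectation inside therefore gives
\begin{equation*}
\tno \EE\bigl[(1+tt_1)^{X_1}\cdots (1+tt_m)^{X_m}\bigr] \;=\; \tno g_{(X_1,\ldots,X_m)}(1+tt_1,\ldots,1+tt_m)
\end{equation*}
in case 1, by the very definition of the joint generating function $g_{(X_1,\ldots,X_m)}$. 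The same manipulation, with $1+tt_i$ replaced by $1+(e^t-1)t_i$ and by $e^{tt_i}$ respectively, proves cases 2a and 2b.

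The only genuine point to verify is the interchange of expectation and coefficient extraction, and, as noted above, this is a direct consequence of linearity once one observes that the inner function is a polynomial of degree at most $n$ in $t$ (or, in cases 2a and 2b, an entire function in $t$ whose coefficients are dominated uniformly in the realisation of $(X_1,\ldots,X_m)$, again because $X_i\leq n$). I expect no further obstacles: the corollary is essentially a bookkeeping consequence of the fixed-configuration theorem together with the definition of $g_{(X_1,\ldots,X_m)}$.
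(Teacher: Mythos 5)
Your proof is correct and is precisely the conditioning-and-averaging argument the paper has in mind when it declares the corollary ``immediate'' after the fixed-configuration theorem: apply the theorem conditionally on $(X_1,\ldots,X_m)$, use the tower property, and swap the (finite) expectation with the coefficient extraction to recognise $g_{(X_1,\ldots,X_m)}$ evaluated at the shifted arguments. Nothing further is needed.
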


For a uniform random $(n,m)$-mapping the joint distribution of $(X_1,\ldots,X_m)$ is the multinomial distribution
with parameters $n$ and $p_1=\ldots,p_m=\frac{1}{m}$. For a random $(n,m)$-mapping where the images take independently
the value $i$ with probability $p_i$ it is the multinomial distribution with parameters $n$ and $p_1,\ldots,p_m$.
Mappings with multinomially distributed preimage sizes $(X_1,\ldots,X_m)$ will in the sequel be called ``multinomial".\\
Let us consider this case as an example.

\begin{exl}\label{ex1}
Let $p_1,\ldots,p_m\geq 0$ with $p_1+\ldots +p_m=1$ and let $$g_{(X_1,\ldots,X_m)}(t_1,\ldots,t_m)=(t_1p_1+\ldots +t_m p_m)^n.$$
Then for $k\leq n$ 
\[\EE\, t_1^{Y_1\eins(k)}\ldots t_m^{Y_m\eins(k)}=\tno (1+t(t_1p_1+\ldots t_m p_m))^n=(t_1p_1+\ldots +t_m p_m)^k\]
That is, the joint distribution of $(Y_1\eins(k),\ldots,Y_m\eins(k))$ is multinomial with parameters $k$ and $p_1,\ldots,p_m$. 
Note that for $p_1=\ldots =p_m$ this is exactly the situation of 
the classical birthday problem (for $m$ birthdays).
{\bf Thus (as long as $k\leq n$) drawing without replacement from a (uniform) multinomial configuration leads to the same occupancy 
distribution at time $k$ as in the setting of the classical birthday problem in the codomain.}
\end{exl}
In other words:\\
sampling $k$ times without replacement from a multinomial $n,(p_1,\ldots,p_m)$ mapping produces a multinomial 
$k,(p_1,\ldots,p_m)$ mapping, 
and sampling without replacement from a $n,(\frac{x_1}{n},\ldots,\frac{x_m}{n})$ multinomial random $(n,m)$-mapping produces for $k\leq n$ the same distribution
of $(Z_1(k),\ldots,Z_m(k))$
as does sampling with replacement from a fixed $(n,m)$ mapping with configuration $(x_1,\ldots,x_m)$).\\ In these cases the 
the description by a multinomial random mapping is exact!

\subsection{Notation and conventions}
Let w.l.o.g. all random variables appearing in the sequel be defined on the same probability space  $(\Omega,{\mathcal A},\PP)$.
The cases ``drawing without"  resp. ``with replacement" are called ``case 1" resp. ``case 2" in the sequel and indicated by the use of the superscripts ``$1$" resp. ``$2$".\\
The collision degree $r$ is a fixed (small) number $r\geq 2$, $r=2$ is the most interesting case.\\

We use the convention $\inf(\emptyset)=\infty$ and let 
$K_r\eins,K_r\zwei$ resp. $R_r$ : $\Omega \lra \mathbb{N}\cup\{\infty\}$ be the random variables

\[\begin{array}{rcl}K_r\eins(\omega)&:=& \inf \{ k\geq 1\;:\; \exists\, i\in\{1,\ldots,m\}\mbox { s.th. } Y_i\eins(k)(\omega)\geq r\}\\[0.2cm]

K_r\zwei(\omega)&:=& \inf \{ k\geq 1\;:\; \exists\, i\in\{1,\ldots,m\}\mbox { s.th. } Y_i\zwei(k)(\omega)\geq r\}\end{array}\] 
``time of the first $r-$-collision" when drawing without resp. with replacement
\[R_r(\omega):= \inf \{ k\geq 1\;:\; \exists\, i\in\{1,\ldots,m\}\mbox { s.th. } Z_i(k)(\omega)\geq r\}\] 
``time of the first $r$-fold repetition" when drawing with replacement.\\

In case 1 (drawing without replacement) it does not make sense to draw from an empty urn. Therefore we stipulate for this case that the drawing process is stopped latest at the
$(n+1)$-drawing (i.e. after the first observation that the urn is empty.). $K_r\eins(\omega)>n$ then means that no $r$-collisions have appeared.

To guarantee the finiteness of these waiting times we assume in the sequel that at least for one $i$ we have $x_i\geq r$ resp. $\PP(X_i\geq r)>0$.\\

It is clear that the distributions of $K_r\eins,K_r\zwei$ resp. $R_r$ depend on the parameters $n,m,x=(x_1,\ldots,x_m)$ resp. $n,m, \PP_{(X_1,\ldots,X_m})$,
but this dependency is for convenience suppressed from the notation.

\subsection{First orientation: expected number of $r$-collisions at time $k$}

For a first orientation one will compute the expected number of $r-$collisions at time $k$. With $Y_i(k)$ resp. $Z_i(k)$ as above 
the number of $r$-collisions is given
by the random variables
$$S_{r}\eins(k):=\sum_{i=1}^{m} {Y_{i}\eins(k) \choose r}\mbox{ resp.  }  S_{r}\zwei(k):=\sum_{i=1}^{m} {Y_{i}\zwei(k) \choose r}$$
while for sampling with repetition the number of $r-$ multi-sets with colliding image (but possibly repeated preimages)
is given by
$$C_r(k):=\sum_{i=1}^m {Z_i(k) \choose r}$$
Let $s_{r,n}:=\sum_{i=1}^m { x_i \choose r}$.
Using the generating functions given above we find:
$$\EE(S_{r}\eins(k))=\frac{(k)_r}{(n)_r}\, s_{r,n}$$
$$\EE(C_r(k)):=\frac{(k)_r}{r!\,n^r} \sum_{i=1}^m x_i^r$$
$$\EE(S_{r})\zwei(k)=\left(\sum_{i=0}^r {r \choose i} (-1)^i (1-\frac{i}{n})^k\right)s_{r,n}
=\left(\frac{(k)_r}{n^r}(1-\frac{r(k-r)}{2n}+...)\right)s_{r,n} $$
Remark: in the case of sampling with repetition we count here an occurring  $r$-collision only once. The corresponding
$r$-set may of course repeatedly occur in sequence of preimages. For the case of counting with multiplicities one gets \cite{RS}:
$$\EE(S_{r,2,multi})(k)=\frac{(k)_r}{n^r}\, s_{r,n}$$
 
Thus we have the following picture:\\[0.1cm]
(1) first $r$ collisions resp. $r$-repetitions will appear at times of magnitude $t_C:=n/(s_{r,n})^{1/r})$ resp. of magnitude 
$t_R:=n/(\sum_{i=1}^m \frac{x_i^r}{r!})^{\frac{1}{r}}$.\\[0.1cm]
(2) one will hope for limit theorems if the the parameters
$n,m,x$ are varied in such a way that for  $t=t_C$ (resp. $t=t_R$) $t=t(n,m,x)\lra \infty$.\\ If the cell probabilities $p_i=\frac{x_i}{n}$ are uniformly small one will
expect (on the ground of known limit theorems for weakly dependent indicator variables) expect Poisson limits for the number of $r-$collisions resp.
$r$-repetitions
.\\[0.1cm]
(3) the difference between drawing with. resp. without replacement will only be notable if $n$ is relatively small or if $\sum_{i=1}^m p_i^r$ 
is of magnitude $\frac{1}{n^{r-1}}$.\\[0.1cm]
(4) in the case of sampling with replacement the $r$-repetitions will appear no later as times of magnitude  $n^{\frac{r-1}{r}}$ (as by \ref{thmvM} resp. \ref{corvM} $r$-repetitions
of preimages will appear at this time).\\

\section{Exact treatment of the collision times}

For the formulation of generating functions we need some definitions.
For $n\in \mathbb{N}, t\in \mathbb{R}$ 
let \begin{eqnarray} 
 p_r(n,t)&:=& \sum_{i=0}^{r-1} {n \choose i} t^i\\
 q_r(n,t)&:=&\sum_{i=0}^{r-1} \frac{n^it^i}{i!}\\
 G_r(n,t)&:=&\sum_{i=0}^{r-1} {n \choose i} t^i(1-t)^{n-i}\end{eqnarray}
Note that for $0\leq t \leq 1$  the function $F_r(n,t):=1-G_r(n,t)$ is the distribution function of the
$r$-th largest element (the $r$-th order statistic) of a sample of $n$ i.i.d. variables uniform on $[0,1]$.

\subsection{ Combinatorial formulae}

We clearly have:
\begin{rem}
\begin{eqnarray*}
\PP(K_r>k)&=&\PP(Y_1(k)\leq r-1,\ldots,Y_m(k)\leq r-1)\;\;\;\mbox{ and }\\\PP(R_r>k)&=&\PP(Z_1(k)\leq r-1,\ldots,Z_m(k)\leq r-1)
\end{eqnarray*}
\end{rem}
\subsubsection{Fixed mapping}
Let again be  $h:\{1,\ldots,n\}\lra\{1,\ldots,m\}$ a mapping with preimage cardinalities $x_i=|h^{-1}(\{i\})|$.\\
From the generating functions of the cell occupancies given above  we get exact combinatorial expressions
for the probabilities in question.

\begin{thm}\label{thmprgen}
\begin{eqnarray}
\PP(K_{r}\eins>k)&=&\tno \prod_{i=1}^m p_r(x_i,t)\\ 
\PP(K_{r}\zwei>k)&=&\tn2 \prod_{i=1}^m p_r(x_i,e^t-1)\\ 
\PP(R_r>k)&=&\tn2 \prod_{i=1}^m q_r(x_i,t)
\end{eqnarray}
\end{thm}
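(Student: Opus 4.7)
The plan is to read off all three identities from the joint generating functions of $(Y_1(k),\ldots,Y_m(k))$ resp.\ $(Z_1(k),\ldots,Z_m(k))$ supplied by the preceding theorem, using one simple observation: if $G(t_1,\ldots,t_m)=\EE\,t_1^{V_1}\cdots t_m^{V_m}$ is the joint generating function of nonnegative integer random variables, then
\[
\PP(V_1\leq r-1,\ldots,V_m\leq r-1)=\bigl[\tau_{r-1}^{(1)}\cdots\tau_{r-1}^{(m)}\,G\bigr]_{t_1=\ldots=t_m=1},
\]
where $\tau_{r-1}^{(i)}$ denotes truncation of the $t_i$-series to degrees $\leq r-1$. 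Equivalently, replace each factor $t_i^j$ with $\mathbf{1}_{j\leq r-1}$.

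First I will apply the Remark to rewrite the three probabilities as $\PP(Y_i\eins(k)\leq r-1\ \forall i)$, $\PP(Y_i\zwei(k)\leq r-1\ \forall i)$ and $\PP(Z_i(k)\leq r-1\ \forall i)$ respectively. Then I will feed the generating functions from the preceding theorem into the truncate-and-evaluate operation. The crucial structural fact is that each of those generating functions is a product of $m$ factors, each factor depending on only one of the $t_i$; hence the truncation-evaluation decouples and acts on a single factor $f_i(t,t_i)$ at a time, before the global $\tno$ resp.\ $\tn2$ is applied in the $t$-variable (which commutes with any operation on the $t_i$).

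I then carry out the truncation-evaluation on each factor: for case (1), $(1+tt_i)^{x_i}=\sum_{j=0}^{x_i}\binom{x_i}{j}t^jt_i^j$, so truncating at $r-1$ in $t_i$ and setting $t_i=1$ yields $\sum_{j=0}^{r-1}\binom{x_i}{j}t^j=p_r(x_i,t)$. For (2a) the same computation with $t$ replaced by $e^t-1$ (inside the $t_i$-factor) produces $p_r(x_i,e^t-1)$. For (2b), $e^{tt_ix_i}=\sum_{j\geq 0}\frac{(tx_i)^j}{j!}t_i^j$ truncates and evaluates to $\sum_{j=0}^{r-1}\frac{x_i^jt^j}{j!}=q_r(x_i,t)$. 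Taking the product over $i$ and reapplying the outer coefficient-extraction operator produces exactly the three claimed identities.

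Honestly, there is no serious obstacle here: once the truncate-and-evaluate lemma is stated, each identity is a one-line computation on a single factor. The only point that needs care is justifying that in case (1) the truncation at $r-1$ is effective even though $(1+tt_i)^{x_i}$ only has degree $x_i$ in $t_i$ (so truncation is automatic when $x_i\leq r-1$); this is handled simply by the convention $\binom{x_i}{j}=0$ for $j>x_i$, so the formula $p_r(x_i,t)=\sum_{j=0}^{r-1}\binom{x_i}{j}t^j$ continues to give the correct probability without any case distinction.
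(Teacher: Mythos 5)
Your proof is correct and follows exactly the route the paper intends: the paper derives these identities directly from the joint occupancy generating functions together with the remark that $\PP(K_r>k)=\PP(Y_1(k)\leq r-1,\ldots,Y_m(k)\leq r-1)$, which is precisely your truncate-at-degree-$r-1$-and-set-$t_i=1$ operation applied factorwise. Your write-up merely makes explicit the step the paper leaves implicit, including the harmless convention $\binom{x_i}{j}=0$ for $j>x_i$.
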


If we denote the $k-$th elementary symmetric function of the variables $x_1,\ldots,x_m$ by $Sym_k(x_m,\ldots,x_m)$ and
the number of surjective mappings of a $k$-element set onto a $d$ element set by $Sur_(k,d)$ we thus have 
in particular for $r=2$
\begin{cor}
For $r=2$

\begin{eqnarray}
\PP(K_{2}{\eins} > k)&=&\frac{k!\,(n-k)!}{n!}\,Sym_k(x_1,\ldots,x_m)\\
\PP(K_{2}{\zwei} > k)&=&\frac{1}{n^k}\sum_{d=0}^k\,Sur(k,d)\,Sym_d(x_1,\ldots,x_m)\\
\PP(R_{2} > k)&=&\frac{k!}{n^k}\,Sym_k(x_1,\ldots,x_m)
\end{eqnarray}

\end{cor}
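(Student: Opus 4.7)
The plan is to specialize the three generating-function expressions in \autoref{thmprgen} to $r=2$ and then read off the coefficients by expanding in elementary symmetric functions. The key observation is that for $r=2$,
\[
p_2(n,t)=1+nt=q_2(n,t),
\]
so in cases (1) and $R_2$ we are extracting a coefficient from a product of linear factors, and in case $K_2^{(2)}$ we are extracting a coefficient from the same product evaluated at $e^t-1$.

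First I would handle $K_2^{(1)}$ and $R_2$ simultaneously. Substituting $p_2(x_i,t)=1+x_i t$ into (4.1) and (4.3) and using the standard identity
\[
\prod_{i=1}^m (1+x_i t)=\sum_{d=0}^{m} \operatorname{Sym}_d(x_1,\ldots,x_m)\,t^d,
\]
extracting the coefficient of $t^k$ gives $\operatorname{Sym}_k(x_1,\ldots,x_m)$ directly, producing the two claimed formulae after multiplying by the respective prefactors $\tfrac{k!(n-k)!}{n!}$ and $\tfrac{k!}{n^k}$.

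For $K_2^{(2)}$, substituting $t \mapsto e^t-1$ in the same product yields
\[
\prod_{i=1}^m\bigl(1+(e^t-1)x_i\bigr)=\sum_{d=0}^{m}\operatorname{Sym}_d(x_1,\ldots,x_m)\,(e^t-1)^d.
\]
Applying the operator $\tfrac{k!}{n^k}[t^k]$ term by term and invoking the identity $k!\,[t^k](e^t-1)^d=\operatorname{Sur}(k,d)$ (already recalled in the proof of the theorem on surjective mappings in \autoref{ssec:stochmod}), I obtain
\[
\PP(K_2^{(2)}>k)=\frac{1}{n^k}\sum_{d=0}^{m}\operatorname{Sur}(k,d)\,\operatorname{Sym}_d(x_1,\ldots,x_m).
\]
Finally I would note that the upper summation index can be replaced by $k$, since $\operatorname{Sur}(k,d)=0$ whenever $d>k$, matching the stated form.

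There is really no serious obstacle here, since the corollary is a direct specialization. The only minor subtlety to flag is the reconciliation of the summation range ($0$ to $m$ versus $0$ to $k$), which is settled by the vanishing of $\operatorname{Sur}(k,d)$ for $d>k$ and of $\operatorname{Sym}_d$ for $d>m$; this also makes the formula symmetric between the cases $k\leq m$ and $k>m$.
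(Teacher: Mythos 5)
Your proposal is correct and matches the paper's (implicit) derivation exactly: the corollary is stated as an immediate specialization of Theorem \ref{thmprgen} to $r=2$, using $p_2(x,t)=q_2(x,t)=1+xt$, the elementary-symmetric-function expansion of $\prod_i(1+x_it)$, and $Sur(k,d)=k!\,[t^k](e^t-1)^d$. Nothing further is needed.
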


Since $x_1+\ldots +x_m=n$, and since we require that at least one $x_i$ be $\geq r$, the products expressing the different generating fucntions above give polynomials in $t$ with degree $\leq n-1$. In particular, in case (1)
$\PP(K_r>n)=0$ and in case (2)  $\PP(R_r>n)=0$.\\

The explicit form of the generating functions makes it possible to prove some ``intuitively obvious" properties. We consider two such
intuitions.\\

 Firstly one expects that $r$-collision search becomes harder the smaller the individual preimage sizes are. The next lemma 
shows this intuition to be true in a very strong sense.
 
\begin{lem}\label{majlem}
 (stochastic ordering over configurations)\begin{enumerate} \item if in a configuration $(x_1,\ldots,x_m)$ there are images  $i,j$ s.th. $r\leq x_i<x_j-1$, then the probabilities
 $\PP(K_r>k)$ can only increase if
$x_i$ is replaced by $x_i+1$ and $x_j$ is replaced by $x_j-1$.
\item if in a configuration $(x_1,\ldots,x_m)$ there are images $i,j$ s.th. $0\leq x_i<x_j-1$, then the probability $\PP(R_r>k)$ can only increase if
$x_i$ is replaced by $x_i+1$ and $x_j$ is replaced by $x_j-1$, or if each is replaced by $(x_i+x_j)/2$.
\end{enumerate}
\end{lem}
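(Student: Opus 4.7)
The plan is to reduce all three inequalities (both cases of $K_r$ in claim (1) and $R_r$ in claim (2)) to a single coefficient-wise polynomial dominance and then verify the latter by a counting argument. By \autoref{thmprgen} each of $\PP(K_r^{(1)}>k)$, $\PP(K_r^{(2)}>k)$, $\PP(R_r>k)$ is obtained from the appropriate product polynomial $G(t) = \prod_{i=1}^m P(x_i, t)$ (with $P \in \{p_r, q_r\}$) by a coefficient-wise monotone operation: extraction of $[t^k]$, possibly preceded by the substitution $t \mapsto e^t - 1$ (whose series has nonnegative coefficients). Since only the $i$-th and $j$-th factors change under the swap and the remaining product has nonnegative coefficients, it suffices to prove that for every $d\ge 0$,
\[
[t^d]\,P(x_i+1, t)\,P(x_j - 1, t) \;\ge\; [t^d]\,P(x_i, t)\,P(x_j, t).
\]

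For claim (1), with $P = p_r$, I use the combinatorial interpretation $[t^d]\prod_i p_r(x_i,t) = N_d(x_1,\ldots,x_m)$ --- the number of $d$-subsets of the $n$ balls meeting every color class in at most $r-1$ elements. Fix a ``moved'' ball $b$ (one element in the old color-$j$ class). Subsets not containing $b$ contribute equally to $N_d$ before and after the swap, so the symmetric difference is confined to subsets containing $b$. A short case analysis shows that the ``lost'' subsets (those with exactly $r-1$ further elements in the old color-$i$ class) are counted by $\binom{x_i}{r-1}\,Q(x_j - 1)$, while the ``gained'' ones (exactly $r-1$ further elements in the new color-$j$ class) are counted by $\binom{x_j - 1}{r-1}\,Q(x_i)$, for a common polynomial $Q$ built from the remaining color classes. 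The desired inequality then reduces to $Q(x_i)/\binom{x_i}{r-1}\ge Q(x_j - 1)/\binom{x_j - 1}{r-1}$, which follows from the monotonicity claim that $\binom{y}{\ell}/\binom{y}{r-1}$ is nonincreasing in $y \ge r - 1$ for each $\ell \le r - 2$. This is immediate from the explicit formula
\[
\frac{\binom{y}{\ell}}{\binom{y}{r-1}} = \frac{(r-1)!}{\ell!\,(y-\ell)(y-\ell-1)\cdots(y-r+2)},
\]
valid for $y \ge r - 1$. The hypothesis $r \le x_i$ guarantees $\binom{x_i}{r-1}>0$ and that all factors in the denominator are positive.

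For claim (2), the cleanest route is to recognize $\PP(R_r > k) = \PP(\max_i Z_i \le r - 1)$ with $Z \sim \mathrm{Mult}(k;\,x_1/n,\ldots,x_m/n)$, and invoke the classical Schur-concavity of this probability as a function of the multinomial probability vector. Both the unit swap $(x_i, x_j) \mapsto (x_i + 1, x_j - 1)$ and the equalization $(x_i, x_j) \mapsto ((x_i + x_j)/2, (x_i + x_j)/2)$ are majorization-reducing, so both can only increase $\PP(R_r > k)$. Alternatively one can run the counting argument of the previous paragraph with $q_r$ in place of $p_r$; the required monotonicity of $(y^\ell/\ell!)/(y^{r-1}/(r-1)!)$ in $y \ge 0$ is even more transparent, and the averaging case (where $(x_i + x_j)/2$ may not be an integer) is handled by observing that the formula $\PP(R_r > k) = \frac{k!}{n^k}[t^k]\prod_i q_r(x_i, t)$ is a polynomial in the $x_i$ and so defined for real arguments.

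The main obstacle is cleanly isolating the correct symmetric difference in the combinatorial step and reducing the comparison to an inequality between binomial ratios; once this is done, the monotonicity of $\binom{y}{\ell}/\binom{y}{r-1}$ closes the argument. Handling the averaging operation in claim (2) is the one point where one either invokes Schur-concavity directly or iterates unit swaps (in the $x_i + x_j$ even case), which is a genuine extension beyond the one-step polynomial dominance that suffices for the other four inequalities.
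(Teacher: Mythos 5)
Your argument is correct, and it reaches the paper's conclusion through the same bottleneck --- the coefficient-wise dominance of the two modified factors, reduced to the monotonicity of $\binom{y}{\ell}/\binom{y}{r-1}$ in $y$ --- but it derives that dominance differently. For part (1) the paper proves the identity $p_r(x,t)p_r(y,t)-p_r(x+1,t)p_r(y-1,t)=t^r\bigl(\binom{x}{r-1}p_{r-1}(y-1,t)-\binom{y-1}{r-1}p_{r-1}(x,t)\bigr)$ algebraically from the recursion $p_r(n,t)=p_r(n-1,t)+t\,p_{r-1}(n-1,t)$, whereas you obtain exactly the same lost/gained decomposition by recoloring one ball and classifying the $d$-subsets containing it; your version has the advantage of explaining \emph{why} the boundary terms are $\binom{x_i}{r-1}Q(x_j-1)$ and $\binom{x_j-1}{r-1}Q(x_i)$, and you are more explicit than the paper about why the case-2 probability follows (substituting $e^t-1$, a series with nonnegative coefficients, preserves coefficient-wise dominance). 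For part (2) the paper fixes $s=x+y$, computes the derivative of $k!\,[t^k]q_r(xt)q_r((s-x)t)$ in $x$ and shows it is nonnegative on $[0,s/2]$, which yields the unit swap and the averaging in one stroke for real $x,y$; your ratio-monotonicity of $(y^\ell/\ell!)/(y^{r-1}/(r-1)!)$ is precisely the inequality that makes that derivative nonnegative, and your Schur-concavity citation is an equivalent (Schur--Ostrowski) packaging of the same computation.

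One soft spot to fix if you write this out: the phrase ``run the counting argument of the previous paragraph with $q_r$ in place of $p_r$'' does not work literally. For $R_r$ the quantity $d!\,[t^d]\prod_i q_r(x_it)$ counts length-$d$ \emph{sequences} of balls with repetition, and the recolored ball $b$ may occur several times in a sequence, so the clean ``exactly $r-1$ further elements'' dichotomy for subsets containing $b$ breaks down. You should instead interpolate: with $s=x_i+x_j$ fixed, $\frac{d}{du}\,q_r((x+u)t)\,q_r((y-u)t)=\frac{t^r}{(r-1)!}\bigl((y-u)^{r-1}q_{r-1}((x+u)t)-(x+u)^{r-1}q_{r-1}((y-u)t)\bigr)$, whose coefficients are nonnegative for $x+u\le y-u$ by your ratio monotonicity; integrating over $u\in[0,1]$ gives the unit swap and over $u\in[0,(y-x)/2]$ the averaging, with no integrality issue. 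Alternatively, keep the appeal to Schur-concavity of $\PP(\max_i Z_i\le r-1)$, but be aware that this citation is carrying the entire content of part (2), and that it is essentially proved by the very derivative computation just described.
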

\begin{proof}
(1) let $x,y\in\mathbb{N},\,r\leq x\leq y-1$. Using the recursion $p_r(n,t)=p_{r}(n-1,t)+tp_{r-1}(n-1,t)$ it is not hard to show that
\[p_r(x,t)p_r(y,t)-p_r(x+1,t)p_r(y-1,t)=t^r\left({x \choose r-1} p_{r-1}(y-1,t)-{y-1 \choose r-1} p_{r-1}(x,t)\right)\]
The non-zero coefficients of $t$ on the rhs are ${y-1 \choose j}{x \choose r-1}-{x \choose j}{y-1 \choose (r-1)},\,0\leq j\leq (r-1)$
are thus nonnegative.\\
(2) let $x,y\in\mathbb{R}_+,x<y$. We show that $ [t^k] q_r(xt)q_r(yt)\leq  [t^k] \left(q_r(((x+y)/2)t\right)^2$.\\ It is easy to see that
equality holds for $k\leq r-1$ and $k>2r-2$. Let $r\leq k \leq 2r-2$.  We have 
$$k!\,[t^k] q_{r}(xt)q_r(yt)=(x+y)^k-\sum_{i=0}^{k-r} {k \choose i}\left(x^i y^{k-i}+y^i x^{k-i}\right)
$$
For fixed sum $s=x+y$  the function $x \mapsto  f(x):=\sum_{i=0}^{k-r} {k \choose i}\left(x^i y^{k-i}+y^i x^{k-i}\right)$ has the derivative
$f^\prime(x)=(k-r+1){k \choose r} \left(x^{k-r+1}(s-x)^r-(s-x)^{k-r+1}x^r\right)$. Thus $x\mapsto f(x)$ 
is strictly decreasing (resp. increasing) on $[0,s/2]$ (resp. [s/2,s]), attaining its minimum at $x=s/2$.
\end{proof} 
The $r$-collision times are therefore stochastically largest when the preimage sizes $\geq r$  are as uniform as possible,
and the $r$-repetition time is stochastically largest when the preimage sizes are as uniform as possible.\\

Secondly one expects that collisions are easier to find when it is guaranteed that the sampled domain points
are mutually different, i.e. if sampling without replacement is used. The following lemma shows that this is indeed
true.

\begin{lem} (stochastic ordering between the different waiting times)\\
1. For $k\in \mathbb{N}$ and $r\geq 2$
\[\PP(K_r\zwei>k)\geq \PP(K_r\eins>k) \mbox{ and } \PP(K_r\zwei>k)\geq P(R_r>k)\]
2. For $r=2, k\in \mathbb{N}$ also \[ \PP(K_2\eins>k)\geq \PP(R_2>k)\]
\end{lem}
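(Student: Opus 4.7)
My plan is to prove all three inequalities by constructing a single coupling of the case-1 and case-2 drawing processes on a common probability space, after which the assertions reduce to pointwise comparisons between the occupancy counts. Let $B_1,B_2,\ldots$ be i.i.d. uniform draws from $\{1,\ldots,n\}$, and define the successive first-visit times $\tau_1<\tau_2<\cdots$ by $\tau_1:=1$ and $\tau_{j+1}:=\inf\{\ell>\tau_j:B_\ell\notin\{B_{\tau_1},\ldots,B_{\tau_j}\}\}$. A short induction (conditional on $B_{\tau_1},\ldots,B_{\tau_j}$ the later $B_\ell$'s remain i.i.d.\ uniform on the whole set, so $B_{\tau_{j+1}}$ is uniform on the complement) shows that $(B_{\tau_1},\ldots,B_{\tau_n})$ is a uniform random permutation of $\{1,\ldots,n\}$. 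I would therefore take $(B_k)_{k\geq 1}$ as the case-2 sample (from which $Y_i\zwei(k)$ and $Z_i(k)$ are computed) and $(B_{\tau_j})_{j\geq 1}$ as the case-1 sample (from which $Y_i\eins(k)$ is computed).

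Write $D(k):=|\{B_1,\ldots,B_k\}|$. In this coupling the following hold pointwise, for every colour $i$ and every $k$:
\[
Y_i\zwei(k)=Y_i\eins(D(k)),\qquad D(k)\leq k,\qquad Y_i\zwei(k)\leq Z_i(k).
\]
Since $k\mapsto Y_i\eins(k)$ is nondecreasing, the first two relations give $Y_i\zwei(k)\leq Y_i\eins(k)$. Together with the third relation this shows that both events $\{K_r\eins>k\}=\bigcap_i\{Y_i\eins(k)<r\}$ and $\{R_r>k\}=\bigcap_i\{Z_i(k)<r\}$ are contained in $\{K_r\zwei>k\}=\bigcap_i\{Y_i\zwei(k)<r\}$, yielding part 1.

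For part 2 the key observation, which is specific to $r=2$, is that $\{R_2>k\}$ forces $D(k)=k$. Indeed, if $Z_i(k)\leq 1$ for every $i$ then no colour can appear in two different draws; in particular no ball can appear twice in $B_1,\ldots,B_k$ (since any repetition of a ball is a fortiori a repetition of its colour). Hence $D(k)=k$, and by the first coupling relation $Y_i\eins(k)=Y_i\zwei(k)\leq Z_i(k)\leq 1$ for every $i$, so $K_2\eins>k$. This gives $\{R_2>k\}\subseteq\{K_2\eins>k\}$ and the second inequality.

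The main obstacle, and also the reason part 2 is stated only for $r=2$, lies in the gap between $Y_i\zwei$ and $Z_i$: for $r\geq 3$ the bound $Z_i(k)\leq r-1$ permits the same ball to be drawn twice, so $D(k)$ can be strictly less than $k$ and the pointwise comparison between $Y_i\eins(k)$ and $Z_i(k)$ is lost. Apart from that, the only delicate point is verifying the uniform-permutation property of $(B_{\tau_j})$, which is a small but essential combinatorial step; once it is in place the rest of the argument is just reading off set inclusions from the coupling.
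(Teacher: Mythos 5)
Your coupling is correct, and all three inclusions you read off from it are valid: the first-visit subsequence $(B_{\tau_j})$ is indeed a uniform random permutation, the pathwise identity $Y_i\zwei(k)=Y_i\eins(D(k))$ with $D(k)\leq k$ gives $\{K_r\eins>k\}\subseteq\{K_r\zwei>k\}$, the trivial bound $Y_i\zwei(k)\leq Z_i(k)$ gives $\{R_r>k\}\subseteq\{K_r\zwei>k\}$, and your observation that $\{R_2>k\}$ forces $D(k)=k$ correctly yields $\{R_2>k\}\subseteq\{K_2\eins>k\}$. The paper proves the same statement by a different technique: it works with the exact combinatorial formulae from Theorem 5.2, writing $\PP(K_r\zwei>k)=\frac{1}{n^k}\sum_{d=0}^{k}Sur(k,d)\binom{n}{d}\PP(K_r\eins>d)$ and bounding each term using monotonicity of $d\mapsto\PP(K_r\eins>d)$, and for part 2 it uses $p_2(x,t)=q_2(xt)$ to obtain the exact factorization $\PP(R_2>k)=\frac{(n)_k}{n^k}\PP(K_2\eins>k)$, which it interprets as $R_2\deq\min(T,K_2\eins)$ with $T$ an independent preimage-repetition time. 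The two arguments are really the integrated and pathwise versions of the same decomposition by the number $D(k)$ of distinct preimages: the paper's mixture weights $\frac{1}{n^k}Sur(k,d)\binom{n}{d}$ are exactly $\PP(D(k)=d)$ in your coupling, and its $r=2$ factorization is your inclusion $\{R_2>k\}=\{D(k)=k\}\cap\{K_2\eins>k\}$ combined with the independence of $\{D(k)=k\}$ from the permutation of values. Your route is more elementary and self-contained (no generating functions needed, and it handles the inequality $\PP(K_r\zwei>k)\geq\PP(R_r>k)$ explicitly, which the paper's displayed proof leaves implicit); the paper's route buys the stronger exact identities rather than mere inequalities. The only loose end in your write-up is the range $k>n$, where the case-1 process has stopped; there the claimed inequalities hold trivially because $\PP(K_r\eins>k)=\PP(R_r>k)=0$ by the paper's conventions, so nothing is lost.
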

\begin{proof}
1. by the formulae in theorem 4.2 above 
\[\hspace*{-0.8cm}\PP(K\zwei_r>k)=\frac{1}{n^k}\sum_{d=0}^{k} Sur(k,d){n \choose d} \PP(K\eins_r >d)\geq \PP(K\eins_r >k)\left(\frac{1}{n^k}\sum_{d=0}^{k}  Sur(k,d) {n \choose d}\right)=\PP(K\eins_r>k)\]
where we have used that $\sum_{d=0}^{k} Sur(k,d){n \choose d}=n^k$.\\
2.:  since $p_2(x,t)=1+xt=q_2(xt)$ the formulae above give
$$\PP(R_2 > k)=\frac{n!}{(n-k)!n!} \PP(K_2\eins >k)$$
Thus $R_2$ is distributed as the $\min(T,K_2\eins)$ where $T$ is independent of $K_2\eins$, and distributed as the waiting time for the
first $2$-collision of preimages.

\end{proof}

In case 2 the numbers ${n \choose d}\frac{Sur_(k,d)}{n^d}=:\PP(I^{(k,n)}=d)$ give the probabilities that the image 
of the (uniform) random mapping $\{1,\ldots,k\}\lra {\mathcal D}$ given by $i\mapsto D_i$ has cardinality $d$. Using this
r.v. $I^{(k,n)}$ we have $\PP(K\zwei >k\,|\, I^{k,n}=d)=\PP(K\eins >d)$, and 
$$\PP(K\zwei>k)=\sum_{d=0}^k \PP(K\eins>k)\PP(I^{(k,n)}=d)$$
These relations hold for any fixed configuration, and thus also for random configurations.\\

If we denote ``$X$ is stochastically  larger than $Y$" by $Y\preceq X$ we may summarise lemma 2 as:
For $r=2$ we have $R_2\preceq K\eins \preceq K\zwei$, and for $r\geq 3$ we have $R_r\preceq K_r\zwei$ and 
$K_r\eins\preceq K_r\zwei$.
The following example shows that in general for $r\geq 3$ no stochastic ordering between $K_r\eins$ and $R_r$ exists:

\begin{exl}
Let $r\geq 3, n=r+1, m=2, x_1=1, x_2=r$. Then on the one hand $$\PP(K_r\eins=r)=\frac{1}{r+1}<\frac{1+r^r}{(r+1)^r}=\PP(R_r=r)$$
and therefore $\PP(K\eins>r)>\PP(R_r>r)$. On the other hand $\PP(K_r\eins>r+1)=0$ but $\PP(R_r>r+1)\geq \PP(R_r>2r-2)>0$.
\end{exl}

\subsubsection{Random configurations}

For multinomial $(n,m)$-random mappings (with parameters $n$ and $p_1,\dots,p_m$) we find:

\begin{thm}
\begin{eqnarray}\mbox{(for $k\leq n$)  }\; \PP(K_r^{(1)}>k)&=& k![t^k] \prod_{i=1}^m q_r(p_i t) \\
\hspace*{1cm} \PP(K_r\zwei>k)&=&\sum_{d=0}^k \PP(K_r\eins >d)\,\PP(I^{(k,n)}=d)
\end{eqnarray}
\end{thm}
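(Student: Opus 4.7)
For part (1), my plan is to reduce everything to Example~\ref{ex1}. That example establishes that if the configuration $(X_1,\ldots,X_m)$ is multinomial with parameters $n,(p_1,\ldots,p_m)$, then for $k\leq n$ the drawn-color profile $(Y_1\eins(k),\ldots,Y_m\eins(k))$ is again multinomial, but now with parameters $k,(p_1,\ldots,p_m)$. Given this, the event $\{K_r\eins>k\}$ simply says every coordinate of this multinomial vector is at most $r-1$, so summing the multinomial probability mass function over the admissible compositions $j_1+\cdots+j_m=k$ with $0\leq j_i\leq r-1$ yields $\PP(K_r\eins>k)$ as a single sum. Recognising each factor $\sum_{j=0}^{r-1}(p_i t)^j/j!$ as $q_r(p_i t)$ and the multinomial coefficient as a coefficient extraction, the sum collapses into $k!\,[t^k]\prod_{i=1}^m q_r(p_it)$. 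The only care needed is that this step hinges on $k\leq n$, precisely because Example~\ref{ex1} breaks once the urn would be exhausted.

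For part (2), the strategy is exactly the one already used in the proof of \autoref{majlem}: couple the ``with replacement'' experiment to a ``without replacement'' experiment through the image-size variable $I^{(k,n)}$. First I would condition on a \emph{fixed} configuration $\mathbf{x}=(x_1,\ldots,x_m)$. The $k$ sampled indices in case 2 arise from an auxiliary uniform random mapping $\{1,\ldots,k\}\to\{1,\ldots,n\}$; conditional on its image having size $d$ (i.e.\ $I^{(k,n)}=d$), those $d$ distinct preimages are uniformly distributed, so drawing them from the urn is identical in law to drawing $d$ balls without replacement. Hence $\PP(K_r\zwei>k\mid \mathbf{x},I^{(k,n)}=d)=\PP(K_r\eins>d\mid \mathbf{x})$, which already gives the identity at the level of fixed configurations.

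The passage from fixed to random configurations is the one point that requires a brief check, though I expect no genuine obstacle. The crucial property is that the distribution of $I^{(k,n)}$ depends only on $k$ and $n$ and is independent of the two-stage choice of the configuration $(X_1,\ldots,X_m)$ (the auxiliary mapping that produces $I^{(k,n)}$ is sampled after, and independently of, the urn filling). Consequently, averaging the conditional identity
\[
\PP(K_r\zwei>k\mid \mathbf{x})=\sum_{d=0}^k \PP(K_r\eins>d\mid \mathbf{x})\,\PP(I^{(k,n)}=d)
\]
over the distribution of $(X_1,\ldots,X_m)$ pulls $\PP(I^{(k,n)}=d)$ outside the expectation and yields (2). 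No feature of the multinomial distribution of the configuration is actually used here, so the formula in fact holds for any random configuration; the multinomial hypothesis is only invoked through $\PP(K_r\eins>d)$ being expressible via (1).

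The only subtlety worth flagging is making sure the index range $d=0,\ldots,k$ in (2) is consistent with the constraint $k\leq n$ in (1): if $k>n$ then some $d$ exceed $n$, but for those values $\PP(I^{(k,n)}=d)=0$, so the sum truncates automatically at $d=\min(k,n)$ and the formula is well-posed without an extra hypothesis on $k$.
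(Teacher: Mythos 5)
Your argument is correct and essentially coincides with the paper's: part (1) is obtained from the multinomial law of $(Y_1\eins(k),\ldots,Y_m\eins(k))$ given in \autoref{ex1} (the paper extracts the coefficient from the generating function $(p_1t_1+\cdots+p_mt_m)^k=k!\,[t^k]\prod_i e^{p_i t_i t}$ rather than summing the probability mass function, which is the same computation), and part (2) is precisely the conditioning on the image size $I^{(k,n)}$ of the auxiliary uniform map that the paper records, for fixed and hence for random configurations, in the paragraph preceding the theorem. The only blemish is a misattribution: that coupling appears in the proof of the lemma on stochastic ordering between the different waiting times, not in \autoref{majlem}.
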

\begin{proof} From \autoref{ex1} we know that in case 1 the joint distribution of $(Y\eins_1(k),\ldots,Y\eins_m(k))$
is for $k\leq n$ the multinomial distribution with parameters $k$ and $p_1,\ldots,p_m$. Thus
$$\EE\, t_1^{Y_1\eins(k)}\ldots t_m^{Y_m\eins(k)}=(t_1p_1+\ldots +t_m p_m)^k=k!\,
[t^k]\prod_{i=1}^m e^{p_it_i t }$$
The representation above follows.
\end{proof}
Again we note the case $r=2$ separately:
\begin{cor}
\begin{eqnarray} \PP(K_2^{(1)}>k)&=& k!\,Sym_k(p_1,\ldots,p_m) \\
\PP(K_2\zwei>k)&=&\frac{1}{n^k}\sum_{d=0}^k\,Sur(k,d)\,\frac{n!}{(n-d)!} Sym_d(p_1,\ldots,p_m)\\
\PP(R_2>k)&=&\frac{k!}{n^k}\frac{n!}{(n-k)!}\,Sym_k(p_1,\ldots,p_m)
\end{eqnarray}
\end{cor}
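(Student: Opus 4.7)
The plan is to derive all three identities as specializations of the preceding theorem (and, for $R_2$, of \autoref{thmprgen}) to $r=2$, where the building blocks simplify to $q_2(pt)=p_2(n,t)|_{\text{appropriate}}=1+pt$ and the generating products become single elementary symmetric polynomials.

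First I would handle $\PP(K_2^{(1)}>k)$ directly. With $r=2$, $q_2(p_it)=1+p_it$, so
$\prod_{i=1}^m q_2(p_it)=\prod_{i=1}^m(1+p_it)=\sum_{d=0}^m Sym_d(p_1,\ldots,p_m)\,t^d$,
and extracting $k!\,[t^k]$ from the theorem immediately gives the claimed $k!\,Sym_k(p_1,\ldots,p_m)$.

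For $\PP(K_2^{(2)}>k)$ I would plug this into the mixture formula $\PP(K_r^{(2)}>k)=\sum_{d=0}^k\PP(K_r^{(1)}>d)\,\PP(I^{(k,n)}=d)$ from the theorem, using the representation $\PP(I^{(k,n)}=d)=\binom{n}{d}Sur(k,d)/n^k$ explained just after Lemma 5.5. The factor $d!\binom{n}{d}$ collapses to $n!/(n-d)!$, producing the stated sum. This is a routine bookkeeping step.

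The only nontrivial step is $\PP(R_2>k)$. Here I would start from the fixed-configuration formula in \autoref{thmprgen} with $r=2$, namely $\PP(R_2>k\mid X_1,\ldots,X_m)=\frac{k!}{n^k}\,Sym_k(X_1,\ldots,X_m)$, and then take expectation over the multinomial configuration. The task is therefore to compute $\EE\,Sym_k(X_1,\ldots,X_m)$ for $(X_1,\ldots,X_m)\sim\mathrm{Mult}(n;p_1,\ldots,p_m)$. Expanding $Sym_k$ as a sum over $k$-subsets $S\subset\{1,\ldots,m\}$ and observing that for \emph{distinct} indices $i_1,\ldots,i_k$ the product $X_{i_1}\cdots X_{i_k}$ coincides with the mixed falling factorial $(X_{i_1})_1\cdots(X_{i_k})_1$, the standard multinomial factorial-moment identity yields
\[
\EE\bigl[X_{i_1}\cdots X_{i_k}\bigr]=(n)_k\,p_{i_1}\cdots p_{i_k}=\frac{n!}{(n-k)!}\,p_{i_1}\cdots p_{i_k}.
\]
Summing over all $\binom{m}{k}$ subsets gives $\EE\,Sym_k(X_1,\ldots,X_m)=\frac{n!}{(n-k)!}\,Sym_k(p_1,\ldots,p_m)$, and multiplying by $k!/n^k$ produces the third formula. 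The main (very mild) obstacle is recognising that when indices are distinct, each $X_{i_j}$ enters to the first power, so the mixed moment reduces to a factorial moment — after which the multinomial formula applies verbatim.
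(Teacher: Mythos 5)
Your proposal is correct and follows the route the paper intends: the corollary is stated without proof, and the first two identities are exactly the $r=2$ specialization of the preceding theorem (with the corrected $\PP(I^{(k,n)}=d)=\binom{n}{d}Sur(k,d)/n^k$), while your computation of $\EE\,Sym_k(X_1,\ldots,X_m)=(n)_k\,Sym_k(p_1,\ldots,p_m)$ via multinomial factorial moments is a sound way to obtain the third. As a minor remark, the third identity also drops out without any moment computation from the configuration-wise relation $\PP(R_2>k)=\tfrac{n!}{(n-k)!\,n^k}\,\PP(K_2^{(1)}>k)$ noted in the proof of the stochastic-ordering lemma, combined with your first formula.
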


Some remarks are in order:
\begin{enumerate}
\item $r$-collisions can only occur if a cell ocupancy $x_i\geq r$ exists, and the appearance of the terms ${x_i \choose r}$ in the
formulae for the collision times (for concrete mappings) reflects this (the formulae for the repetition times contain the terms $x_i^r$ instead).
(This was earlier remarked by Wiener \cite{Wi} and Ramanna and Sarkar \cite{RS}).
\item the consideration of random configurations involves an ``averaging" over an ensemble of mappings. It is then clear that the description
by a random mapping is not appropriate for ``improbable" mappings. E.g. if $n=m$ and $h$ is a permutation no collisions are possible. It turns out,
however, that the characteristics which determine the collision behaviour have under certain conditions a weak limit for ``very large" random mappings (i.e.
have values of the same order of magnitude  for ``almost all" mappings). In these cases we may say that random mappings have a typical collision behaviour.  
\item probability bounds for the collision times: 
since we are
interested in limit theorems rather than probability bounds we shall not pursue this here. But we note that  
the concrete representation of the probabilities as coefficients of power series makes it possible to use the
saddle point method to derive very good estimates for these probabilities (in the same way as Good \cite{Go} did for the multinomial distribution). 
\end{enumerate}

\subsection{The generating functions for the collision/repetition times}

There are several methods to treat the distributions of these waiting times. We use (integral representations) for the
generating functions of the probabilities $\PP(K_r>k)$ resp. $R_r>k$, because these are particularly near at hand, given the
explicit representation for the coefficients.

Let in the sequel 
\[g_r\eins(u):=\sum_{k=0}^{\infty} u^k \PP(K_r\eins >k)\] 
\[g_r\zwei(u):=\sum_{k=0}^{\infty} u^k \PP(K_r\zwei >k)\] 
\[h_r(u):=\sum_{k=0}^{\infty} u^k \PP(R_r>k)\] 

(Recall the convention $\PP(K\eins>n+1)=0$.) 

These series are clearly convergent for $|u|<1$, and we have
\[g_r\eins(1)=\EE(K_r\eins),\;\;\; g_r\zwei(1)=\EE(K_r\zwei),\;\;\; h_r(1)=\EE(R_r).\]

\begin{thm}\label{thmgenrep}

\begin{eqnarray} g_r\eins(u)&=&(n+1)\int_0^1 (1-t)^n \prod_{i=1}^m p_r(x_i,\frac{ut}{1-t})\,dt \\ &=& (n+1) \int_0^{\infty} e^{-(n+1)s}\,\prod_{i=1}^m p_r(x_i,u(e^s-1))\;ds\\
g_r\zwei(u)&=&n \int_0^{\infty} e^{-ns}\,\prod_{i=1}^m p_r(x_i, e^{us}-1)\;ds\\
h_r(u)&=&n \int_0^{\infty} e^{-ns}\,\prod_{i=1}^m q_r(x_ius)\;ds\end{eqnarray}
\end{thm}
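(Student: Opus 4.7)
The plan is to combine the coefficient-extraction formulas of \autoref{thmprgen} with standard integral representations of the prefactors $\frac{k!(n-k)!}{n!}$ and $\frac{k!}{n^k}$, and then interchange summation and integration. Since by \autoref{thmprgen} each probability $\PP(K_r^{(1)}>k),\PP(K_r^{(2)}>k),\PP(R_r>k)$ is the $k$-th Taylor coefficient of a power series (a polynomial in the case of $K_r^{(1)}$) multiplied by such a prefactor, writing the prefactor as an integral and moving the resulting geometric sum $\sum_k (ut)^k$ (respectively $\sum_k (us)^k$) inside the integral reproduces the product structure on the right-hand side.

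For (4.6) I would start from $\PP(K_r^{(1)}>k)=\frac{k!(n-k)!}{n!}\,[t^k]\prod_{i=1}^m p_r(x_i,t)$ and use the Beta identity
\[
\frac{k!(n-k)!}{n!}=(n+1)\int_0^1 t^k(1-t)^{n-k}\,dt\qquad (0\le k\le n).
\]
Since $P(t):=\prod_{i=1}^m p_r(x_i,t)$ is a polynomial, the expansion $(1-t)^n P\!\bigl(\tfrac{ut}{1-t}\bigr)=\sum_k ([t^k]P)\,u^k t^k (1-t)^{n-k}$ is a finite sum, so I can integrate term by term to recognize $g_r^{(1)}(u)$ on the left. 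To pass to (4.7) I substitute $t=1-e^{-s}$, which maps $[0,1]$ to $[0,\infty)$ and gives $(1-t)^n\,dt=e^{-(n+1)s}\,ds$ and $\frac{ut}{1-t}=u(e^s-1)$; this is a clean change of variables.

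For (4.8) and (4.9) I would replace the prefactor $\frac{k!}{n^k}$ by the Gamma integral
\[
\frac{k!}{n^k}=n\int_0^\infty e^{-ns}s^k\,ds.
\]
Inserting this into the formulas of \autoref{thmprgen} and again interchanging the sum over $k$ with the integral in $s$, the factor $\sum_k ([t^k]Q)\,(us)^k$ assembles back into $Q(us)$, where $Q(t)$ stands for $\prod_i p_r(x_i,e^t-1)$ in case (2) and $\prod_i q_r(x_i,t)$ for $R_r$ (using that $q_r(x_i,t)$ is a polynomial in $x_it$, so $q_r(x_i,us)=q_r(x_ius)$). This yields (4.8) and (4.9) directly.

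The only technical point is justifying the interchange of sum and integral. In case~(1) this is trivial because $K_r^{(1)}\le n+1$, so the series defining $g_r^{(1)}(u)$ is actually a polynomial and the integrand is a polynomial in $t$ (respectively a finite linear combination of exponentials in $s$). In cases (2) and (3) the series in $k$ converges absolutely for $|u|<1$ and the integrand $e^{-ns}$ provides exponential decay dominating the polynomial growth of the products in $s$, so Fubini/Tonelli applies. Thus I do not expect any genuine obstacle; the work is essentially the two elementary integral identities and the substitution $t=1-e^{-s}$.
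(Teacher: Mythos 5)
Your proof is correct and follows essentially the same route as the paper: the paper's own argument consists precisely of inserting the Beta identity $\int_0^1 t^a(1-t)^b\,dt=\frac{a!\,b!}{(a+b+1)!}$ and the Gamma identity $\int_0^\infty s^k e^{-s}\,ds=k!$ into the coefficient formulas of \autoref{thmprgen} and interchanging sum and integral by nonnegativity. Your additional details (the substitution $t=1-e^{-s}$ linking the two forms of $g_r^{(1)}$, and the Tonelli justification) are exactly what the paper leaves implicit.
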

\begin{proof}
Using the well known relations $\int_{0}^1 t^{a}(1-t)^{b}\,dt=\frac{a!b!}{(a+b+1)!}$ and $\int_0^\infty s^k e^{-s}=k!$ this follows imediately from
the representation in \autoref{thmprgen} above. Note that summation and integration may be freely interchanged, since all sumands are nonnnegative.
\end{proof}

\subsection{Expectation of the waiting times}

For the expectations we have thus the following expressions 

\begin{prp}
\begin{eqnarray}
\EE(K_r\eins)&=&(n+1)\int_0^1 (1-t)^n \prod_{i=1}^m p_r(x_i,\frac{t}{1-t})\,dt\\&=& (n+1)\int_0^{\infty} e^{-(n+1)s}\,\prod_{i=1}^m p_r(x_i,(e^s-1))\;ds\\
\EE(K_r\zwei)&=&n \int_0^{\infty} e^{-ns}\,\prod_{i=1}^m p_r(x_i, e^{s}-1)\;ds\\
\label{erwdar}\EE(R_r)&=&n \int_0^{\infty} e^{-ns}\,\prod_{i=1}^m q_r(x_is)\;ds\end{eqnarray}
\end{prp}

Some remarks:
\begin{enumerate}
\item As $p_r(x,t)=(1+t)^x $ for $x\leq r-1$ we may rewrite the expectations of the collision
times using $M_r:=\{i\,:\,x_i\geq r \}$ as follows
$$\EE(K_r\eins)=(n+1)\int_0^1 \prod_{i\in M_r} G_r(x_i,t)\,dt$$
$$\EE(K_r\zwei)=n \int_0^{\infty} \prod_{i\in M_r} G_r(x_i,1-e^{-t})\,dt$$
The quotients $\EE(K_r\eins)/(n+1)$ and $\EE(K_r\zwei/n)$ thus depend only on the $x_i$ with $x_i\geq r$. In contrast
$\EE(R_r)/n$ depends on all $x_i$ with $x_i>0$. 
\item if $T_1,\ldots,T_m$ are independent $B(r,x_i+1-r)$-variables we have 
$\EE(K_r\eins)=(n+1)\,\EE(\min\{T_1,\ldots,T_m\})$. Similarly, if $T_1,\ldots,T_m$ are independent, and $T_i$ is distributed as the $r$-th order statistic of 
$x_i$ independent exponential variables with mean 1, we have
$\EE(K_r\zwei)=n\,\EE\min\{T_1,\ldots,T_m\}$.\\ Finally, if  $T_1,\ldots,T_m$ are independent $\Gamma(x_i,r)$ distributed we have
$\EE(R_r)=n\,\EE\min\{T_1,\ldots,T_m\}$ (this was earlier shown by Holst \cite{Ho2}).
\item for the classical case the representation \eqref{erwdar} is a well known result due to Klamkin and Newman (\cite{KM}). The general multinomial
case was given in \cite{GTF} and \cite{Ho2}.

\end{enumerate}

In simple cases the integrals can be evaluated explicitly. In the sequel $B(a,b)$ denotes the Beta function.
\begin{exl} 
Let $x_i\leq r$ for all $i$, and $a:=|\{ i\,:\,x_i=r\}|\geq 1$. Then
\begin{eqnarray*}
\EE(K_r\eins)&=&(n+1)\int_0^1 (1-t^r)^a\,dt=\tfrac{n+1}{r} B(\tfrac{1}{r},1+a)=(n+1)\tfrac{a!\,r^a}{\prod_{i=1}^a (1+ir)}\approx \Gamma(1+\tfrac{1}{r})\tfrac{n+1}{(1+a)^{1/r}}\\
\EE(K_r\zwei)&=& n\int_0^1 (1-t^r)^a\tfrac{1}{1-t}\,dt=\tfrac{n}{r}\,\left(\sum_{i=1}^{r-1} B(\tfrac{i}{r},a+1)\right)\end{eqnarray*}
In the case $a=1$ of only one possible $r$-collision thus
$$\EE(K_r\eins)=\tfrac{r}{r+1}\;(n+1)\mbox{  and } \EE(K_r\zwei)=n\,(1+\tfrac{1}{2}+\ldots +\tfrac{1}{r})$$
Note that this example includes the exact solutions for the case of ``$r$-regular" functions, where $r$-regular means that $a=m,\,n=m r$.
\end{exl}

\begin{exl}
Let $x_i\geq r$ for only one $i$, $x_i=x$. Then 
$$\EE(K_r\eins)=r\;\tfrac{n+1}{x+1}\;\;\mbox{  and  }\;\;\EE(K_r\zwei)=n\;\sum_{i=0}^{r-1}\tfrac{1}{x-i}$$
In particcular, in the case of a constant mapping ($x=n$) thus
$$\EE(K_r\eins)=r\;\;\mbox{  and  }\;\;\EE(K_r\zwei)=r+\sum_{i=1}^{r-1}\tfrac{i}{n-i}$$
\end{exl}

\begin{prp}(asymptotic expansion in the classical case)\\
Let $n=m$, $x_1=x_2=\ldots=x_m=1$. Then $\EE(R_r)/n$ has an asymptotic expansion in powers of $n^{-1/r}$.
\end{prp}
\begin{proof}
(Sketch) By Klamkin's and Newman's formula
$$\EE(R_r)=n\,\int_0^{\infty} (q_r(y)\,e^{-y})^n\,dy=:n\, I_r(n)$$
The substitution $t=\left(r!(y-\log(q_r(y))\right)^{{1/r}}$ transforms $I_r(n)$ to
$$I_r(n)=\int_{0}^\infty e^{-n\tfrac{t^{r}}{r!}}\,\tfrac{t^{r-1}}{y(t)^{r-1}}\,q_r(y(t))\,dt$$
Since $y(t)>t$ for $t>0$ (comp. \autoref{lema3}) and since $t\mapsto q_r(t)/t^{r-1}$ is decreasing
$$I_r(n)<\int_{0}^\infty e^{-n\tfrac{t^{r}}{r!}}\,\,q_r(t))\,dt=\tfrac{1}{r}\sum_{i=0}^{r-1}\tfrac{1}{i!}(\tfrac{r!}{n})^{(i+1)/r}\Gamma(\tfrac{i+1}{r})$$
Further, $t\mapsto \tfrac{t^{r-1}}{y(t)^{r-1}}\,q_r(y(t))=: g_r(t)$ is analytic in a neighbourhood of $0$, say $g_r(t)=\sum_{i=0}^\infty {a_i(r)} t^i$.
An application of Laplace's method (e.g. \cite{deB}, chap. 4) now gives
$$I_r\sim \frac{1}{r} \sum_{i=0}^\infty a_i(r) \Gamma(\tfrac{i+1}{r})\left(\tfrac{r!}{n}\right)^{(i+1)/r}$$
\end{proof}
For instance, in the classical case 
\begin{eqnarray*}
\hspace*{-2.5cm}\EE(R_2)&=&\tfrac{n}{2}\,\left((\tfrac{2}{n})^{1/2}\Gamma(\tfrac{1}{2})+\tfrac{2}{3}\,(\tfrac{2}{n})^{1/1}\Gamma(\tfrac{2}{2})+\tfrac{1}{12}(\tfrac{2}{n})^{3/2}\Gamma(\tfrac{3}{2})-\tfrac{2}{135}(\tfrac{2}{n})^{4/2}\Gamma(\tfrac{4}{2})+\tfrac{1}{864}(\tfrac{2}{n})^{5/2}\Gamma(\tfrac{5}{2})+\ldots\right)\\
\hspace*{-2.5cm}\EE(R_3)&=&\tfrac{n}{3}\,\left((\tfrac{6}{n})^{1/3}\Gamma(\tfrac{1}{3})+\tfrac{1}{2}\,(\tfrac{6}{n})^{2/3}\Gamma(\tfrac{2}{3})+\tfrac{21}{80}(\tfrac{6}{n})^{3/3}\Gamma(\tfrac{3}{3})+\tfrac{7}{240}(\tfrac{6}{n})^{4/3}\Gamma(\tfrac{4}{3})+\tfrac{83}{13440}(\tfrac{6}{n})^{5/3}\Gamma(\tfrac{5}{3})+\ldots\right)\\
\hspace*{-2.5cm}\EE(R_4)&=&\tfrac{n}{4}\,\left((\tfrac{24}{n})^{1/4}\Gamma(\tfrac{1}{4})+\tfrac{2}{5}\,(\tfrac{24}{n})^{2/4}\Gamma(\tfrac{2}{4})+\tfrac{17}{100}(\tfrac{24}{n})^{3/4}\Gamma(\tfrac{3}{4})+\tfrac{194}{2625}(\tfrac{24}{n})^{4/4}\Gamma(\tfrac{4}{4})+\tfrac{271}{42000}(\tfrac{24}{n})^{5/4}\Gamma(\tfrac{5}{4})+\ldots\right)\\
\hspace*{-2.5cm}\EE(R_5)&=&\tfrac{n}{5}\,\left((\tfrac{120}{n})^{1/5}\Gamma(\tfrac{1}{5})+\tfrac{1}{3}\,(\tfrac{120}{n})^{2/5}\Gamma(\tfrac{2}{5})+\tfrac{5}{42}(\tfrac{120}{n})^{3/5}\Gamma(\tfrac{3}{5})+\tfrac{11}{252}(\tfrac{120}{n})^{4/5}\Gamma(\tfrac{4}{5})+\tfrac{515}{31752}(\tfrac{120}{n})^{5/5}\Gamma(\tfrac{5}{5})+\ldots\right)
\end{eqnarray*}

The case $r=2$ of this proposition is well known (cmp. \cite{Kn}, section 1.2.11.3  and problem 20 there), note that in the classical case $\EE(R_2)=1+Q(n)$), and for $r=3$ the first three terms were given by Holst \cite{Ho2}, but I couldn't find the asymptotic series for 
higher $r$ in the literature. (Klamkin and Newman give only the first term).  Since 
even the encyclopaedic work \cite{FlSedg} (where the problem is treated on p. 116)  doesn't mention them, they may at least not be well known. To make the remainder $o(1)$ one has to take $r$ terms of the asymptotic series for $\EE(R_r)$. For example, for $r=3$ and $n=365$
the first term gives $\EE(R_3)\approx 82,87442$, the first three terms give $\EE(R_3)\approx 88,72504$ while the exact value is $\EE(R_3)=88,73891...$\\

 Let in the sequel $\tilde{s}_r:=\tfrac{\sum_{i=1}^m x_i^r}{r!}$,\,$s_r:=\sum_{i=1}^m{x_i \choose r}$
and let w.l.o.g $x_1:=\max\{x_1,\ldots,x_m\}$. 
We find the following bounds for the expectations:

\begin{thm}\label{thml}(lower bounds)
\begin{eqnarray}
\label{eq1} \EE(K_r\eins)&\geq& (n+1)\, B(\tfrac{1}{r},1+s_r)>\Gamma(1+\tfrac{1}{r})\,\tfrac{n+1}{({s_r+1})^{1/r}}\\
\label{eq2} \EE(K_r\zwei)&>&\Gamma(1+\tfrac{1}{r})\,\tfrac{n}{(s_r)^{1/r}}\\
\label{eq3} \EE(R_r)&>&\Gamma(1+\tfrac{1}{r})\,\tfrac{n}{(\tilde{s}_r)^{1/r}}
\end{eqnarray}
\end{thm}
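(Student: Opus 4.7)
The plan is to lower-bound each of the three integrands in the representations of \autoref{thmgenrep} by a simple closed-form expression, evaluate the resulting one-variable integrals via the substitution $u = t^r$ as Beta or Gamma values, and then extract the stated $\Gamma(1+1/r)/(\cdot)^{1/r}$ shape using Gautschi's inequality $\Gamma(x+s)/\Gamma(x) < x^s$ (valid for $x \geq 1$, $0 < s < 1$). The whole argument reduces to two pointwise inequalities, one for $G_r$ (controlling the collision-time integrals) and one for $q_r$ (controlling the repetition-time integral).

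These are (A) $G_r(x,t) \geq (1-t^r)^{\binom{x}{r}}$ for $x \in \mathbb{N}$, $t \in [0,1]$, and (B) $q_r(y)\,e^{-y} \geq e^{-y^r/r!}$ for $y \geq 0$. For (A), I would interpret $G_r(x,t) = \PP(U_{(r)} > t)$ with $U_1,\ldots,U_x$ i.i.d.\ uniform on $[0,1]$, and observe that $\{U_{(r)} > t\} = \bigcap_{S} A_S$ over all $r$-subsets $S \subseteq \{1,\ldots,x\}$, where $A_S := \{\exists\, i \in S:\ U_i > t\}$ is an increasing event with $\PP(A_S) = 1-t^r$. The FKG inequality then yields $G_r(x,t) = \PP(\bigcap_S A_S) \geq \prod_S \PP(A_S) = (1-t^r)^{\binom{x}{r}}$. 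For (B), set $h(y) := \log q_r(y) - y + y^r/r!$; using $q_r'(y) = q_{r-1}(y)$ and the identity $q_r(y)-q_{r-1}(y) = y^{r-1}/(r-1)!$ a short computation yields
\begin{equation*}
h'(y) = \frac{y^{r-1}}{(r-1)!}\cdot\frac{q_r(y)-1}{q_r(y)} \geq 0 \quad (y \geq 0),
\end{equation*}
and $h(0)=0$, so $h\geq 0$ throughout, with strict inequality for $y>0$.

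With these in hand, rewrite the three representations of \autoref{thmgenrep} (using $(1-t)^{x_i}\,p_r(x_i, t/(1-t)) = G_r(x_i, t)$ and, in case 2, the substitution $t = 1 - e^{-s}$) as
\begin{align*}
\EE(K_r\eins) &= (n+1)\int_0^1 \prod_i G_r(x_i, t)\,dt,\\
\EE(K_r\zwei) &= n\int_0^1 \prod_i G_r(x_i, t)\,\tfrac{dt}{1-t},\\
\EE(R_r) &= n\int_0^\infty \prod_i q_r(x_i t)\,e^{-x_i t}\,dt.
\end{align*}
Taking products of (A) over $i$ gives $\prod_i G_r(x_i, t) \geq (1-t^r)^{s_r}$, and taking products of (B) with $y = x_i t$ gives $\prod_i q_r(x_i t)\,e^{-x_i t} \geq e^{-\tilde{s}_r t^r}$. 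The substitution $u = t^r$ then evaluates
\begin{equation*}
\int_0^1 (1-t^r)^{s_r}\,dt = \tfrac{1}{r}\,B(\tfrac{1}{r}, 1+s_r) \quad\text{and}\quad \int_0^\infty e^{-\tilde{s}_r t^r}\,dt = \Gamma(1+\tfrac{1}{r})\,\tilde{s}_r^{-1/r},
\end{equation*}
while $(1-t^r)/(1-t) = 1 + t + \cdots + t^{r-1}$ turns the middle integral into $\tfrac{1}{r}\sum_{k=1}^{r} B(k/r, s_r)$, of which the $k=1$ term $\tfrac{1}{r}\,B(1/r, s_r) = \Gamma(1+1/r)\,\Gamma(s_r)/\Gamma(s_r+1/r)$ alone already exceeds $\Gamma(1+1/r)\,s_r^{-1/r}$ by Gautschi. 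The analogous Gautschi estimate applied to $\Gamma(1+s_r)/\Gamma(s_r+1+1/r)$ delivers the right-hand bound for $\EE(K_r\eins)$, and the $\EE(R_r)$ integral is already in the desired form; strictness propagates from the strict inequality in (B) for $y>0$ and the strict form of Gautschi.

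The main obstacle is inequality (A): recognizing the event $\{U_{(r)}>t\}$ as the intersection over $r$-subsets of the increasing events $A_S$ is the non-routine step that makes FKG applicable. The derivative argument for (B) is clean once the cancellation $q_r-q_{r-1} = y^{r-1}/(r-1)!$ is spotted, and once both inequalities are in hand the remaining manipulations (the $u=t^r$ substitution, the $(1-t^r)/(1-t)$ geometric-sum trick, and Gautschi) are standard bookkeeping.
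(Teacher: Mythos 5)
Your proof is correct, but it differs from the paper's in two of its three ingredients, so a comparison is worthwhile. The paper derives \eqref{eq1} exactly as you do, from $G_r(x,t)\geq(1-t^r)^{\binom{x}{r}}$ followed by a moment comparison for a Gamma variable (Jensen's inequality, which is the probabilistic face of your Gautschi step); however, it proves that key inequality (its \autoref{lema1}) by a calculus argument — showing the logarithm of the quotient of the two sides has nonnegative derivative in $p$, the sign coming from a comparison of binomial tail probabilities — whereas your FKG/Harris argument, writing $\{U_{(r)}>t\}$ as the intersection of the increasing events $A_S$ over $r$-subsets $S$, is a genuinely different and arguably more conceptual proof that also explains why the exponent is $\binom{x}{r}$. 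For \eqref{eq2} the paper stays in the $s$-variable and invokes a second pointwise bound, $G_r(x,1-e^{-s})\geq e^{-\binom{x}{r}s^r}$ (\autoref{lema2}, again proved by differentiating a log-quotient), which immediately yields $n\int_0^\infty e^{-s_rs^r}\,ds=\Gamma(1+\tfrac1r)\,n\,s_r^{-1/r}$; you instead substitute $t=1-e^{-s}$, reuse the same bound $(1-t^r)^{s_r}$ as in \eqref{eq1}, and handle the extra factor $1/(1-t)$ by the geometric-sum expansion, discarding all but the $k=1$ Beta term. Your route has the merit of needing only one $G_r$-inequality instead of two (and your intermediate bound $\tfrac{n}{r}\sum_{k=1}^rB(\tfrac kr,s_r)$ is slightly sharper than the paper's), at the cost of the extra bookkeeping with Gautschi applied at $x=s_r$. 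For \eqref{eq3} your derivative computation for $h(y)=\log q_r(y)-y+y^r/r!$ is essentially the paper's own proof of \autoref{lema3}. One further point in your favor: your normalization $\int_0^1(1-t^r)^{s_r}\,dt=\tfrac1rB(\tfrac1r,1+s_r)$ is the correct one (it matches the paper's worked example with $x_i\le r$), and the factor $\tfrac1r$ is what makes the chain $\tfrac1rB(\tfrac1r,1+s_r)=\Gamma(1+\tfrac1r)\Gamma(1+s_r)/\Gamma(1+s_r+\tfrac1r)>\Gamma(1+\tfrac1r)(1+s_r)^{-1/r}$ close as stated; the theorem as printed omits this $\tfrac1r$ in front of $B$.
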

\begin{proof} 
\ref{eq1} The left inequality  follows using the inequality $G_r(x,t)\geq (1-t^r)^{x \choose r}$ (cmp. \autoref{lema1}). The second inequality follows
from Jensen's inequality: let $X$ be a $\Gamma(s+1,1)$-distributed random variable, then 
$$\tfrac{\Gamma(s+1+\tfrac{1}{r})}{\Gamma(s+1)}=\EE(\sqrt[r]{X})\leq \sqrt[r]{\EE(X)}=\sqrt[r]{s+1}$$
\ref{eq2} follows from $G_r(x,1-e^{-s})\geq e^{-{x \choose r}s^r}$  (for $s>0,x\geq r$, cmp.\autoref{lema2})\\
\ref{eq3} follows from $q_r(t)\,e^{-t}> e^{-\tfrac{t^r}{r!}}$ (for $t>0$, cmp. \autoref{lema3})\\
\end{proof}

Our next aim is to find upper bounds. Let $u_r:=\tfrac{\sum_{i\in M_r} x_i}{|M_r|}$ and $w:=\min\{n,m\}$. Directly from the majorisation \autoref{majlem} we have
\begin{prp} (upper bounds 1)
\begin{eqnarray}
\EE(K_r\eins)&\leq& (n+1)\, B(\tfrac{1}{r},1+u_r)\approx \Gamma(1+\tfrac{1}{r})\,\tfrac{n+1}{(u_r+1)^{1/r}}\\
\EE(K_r\zwei)&\leq&\tfrac{n}{r} \left(\sum_{i=1}^{r-1} B(\tfrac{i}{r},1+u_r)\right)\\
\EE(R_r)&\leq& w \int_0^\infty e^{-w{s}}(q_r(s))^w\,dt \approx \Gamma(1+\tfrac{1}{r})\,({r!}{w^{r-1}})^{1/r}
\end{eqnarray}
\end{prp}

These upper bounds have the disadvantage that they are not easy to compare to the lower bounds.
The next theorem gives upper bounds that match the lower bounds.

\begin{thm}\label{thmu2} (upper bounds 2)
\begin{eqnarray}
\label{eq4} \EE(K_r\eins)\leq \EE(K_r\zwei)<\tfrac{n}{s_r^{1/r}}\left(\sum_{i=0}^{r-1}{x_1 \choose i}\tfrac{1}{r}\,\Gamma(\tfrac{i+1}{r})\,(s_r)^{-i/r}\right)\\
\label{eq5} \EE(R_r)<\tfrac{n}{\tilde{s}_r^{1/r}}\left(\sum_{i=0}^{r-1}{x_1 \choose i}\tfrac{1}{r}\,\Gamma(\tfrac{i+1}{r})\,(\tilde{s}_r)^{-i/r}\right)
\end{eqnarray}
\end{thm}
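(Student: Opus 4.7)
The first inequality $\EE(K_r\eins) \le \EE(K_r\zwei)$ is exactly the stochastic ordering $K_r\eins \preceq K_r\zwei$ established in the second lemma of this subsection. For the remaining two bounds the plan is to recast both sides as integrals and compare them factor by factor.

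To prepare the right-hand side, observe that for $A>0$ the substitution $u=As^r$ turns the Gamma function into $\int_0^\infty s^i\,e^{-As^r}\,ds = \Gamma((i+1)/r)/(rA^{(i+1)/r})$, so that
\begin{equation*}
\tfrac{n}{A^{1/r}}\sum_{i=0}^{r-1}\binom{x_1}{i}\tfrac{1}{r}\Gamma\bigl(\tfrac{i+1}{r}\bigr) A^{-i/r} \;=\; n\int_0^\infty p_r(x_1,s)\,e^{-As^r}\,ds.
\end{equation*}
With $A=s_r$ for $\EE(K_r\zwei)$ and $A=\tilde{s}_r$ for $\EE(R_r)$, the claimed bounds thus become $\EE(K_r\zwei) < n\int_0^\infty p_r(x_1,s)\,e^{-s_r s^r}\,ds$ and $\EE(R_r) < n\int_0^\infty p_r(x_1,s)\,e^{-\tilde{s}_r s^r}\,ds$. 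On the left-hand side I would use the integral representations from the proposition preceding the theorem, namely $\EE(K_r\zwei) = n\int_0^\infty \prod_{i\in M_r}G_r(x_i,1-e^{-s})\,ds$ and $\EE(R_r) = n\int_0^\infty\prod_{i=1}^m q_r(x_i s)\,e^{-x_i s}\,ds$.

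The plan is then to isolate the factor belonging to the cell of maximum size $x_1$ and to bound it pointwise against $p_r(x_1,s)$: the elementary estimates $G_r(x_1,1-e^{-s}) \le p_r(x_1,1-e^{-s}) \le p_r(x_1,s)$ follow from $(1-u)^{x-j}\le 1$ and $1-e^{-s}\le s$ together with the nonnegative coefficients of $p_r$, and an analogous estimate matches the leading part of $q_r(x_1 s)\,e^{-x_1 s}$ with $p_r(x_1,s)$. The remaining factors are to be dominated by the Gaussian-type expressions $e^{-(s_r-\binom{x_1}{r})s^r}$ respectively $e^{-(\tilde s_r - x_1^r/r!)s^r}$, so that multiplying the two pieces gives the integrand $p_r(x_1,s)\,e^{-As^r}$ and the target bounds follow from the integral identity in the first step.

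The main obstacle is that these last bounds on the product over $i\ne 1$ are in the \emph{opposite} direction to \autoref{lema2} and \autoref{lema3} (which give $G_r(x,1-e^{-s})\ge e^{-\binom{x}{r}s^r}$ and $q_r(xs)e^{-xs} > e^{-(xs)^r/r!}$, providing the lower-bound inequalities used in \autoref{thml}), and so they cannot hold pointwise for large $s$. The resolution is that the Weibull weight $e^{-As^r}$ confines the effective support of the integrand to $s=O(A^{-1/r})$, where the discrepancy is of lower order and can be absorbed into the polynomial prefactor $p_r(x_1,s)$; the contribution from large $s$ is controlled via the crude bounds $G_r(x,u)\le 1$ and $q_r(t)e^{-t}\le 1$ together with the strict decay of $e^{-As^r}$. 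Carrying out this truncation/absorption argument precisely is the bulk of the technical work, and the strict inequality in the conclusion is inherited from the strictness in \autoref{lema3}.
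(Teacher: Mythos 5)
Your reduction of both sides to integrals is the right starting point and agrees with the paper's: the claimed right-hand side equals $n\int_0^\infty p_r(x_1,y)\,e^{-s_ry^r}\,dy$, while $\EE(K_r\zwei)=n\int_0^\infty e^{-L(s)}\,ds$ with $L(s):=-\sum_i\log G_r(x_i,1-e^{-s})$, and the first inequality via the stochastic-ordering lemma is fine. But the core of your argument is a genuine gap, and you have in fact diagnosed it yourself: \autoref{lema2} gives $L(s)\le s_rs^r$, so the integrand $e^{-L(s)}$ is everywhere \emph{at least} $e^{-s_rs^r}$, and the pointwise domination you need goes the wrong way. Your proposed repair does not close this. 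The ratio $e^{s_rs^r-L(s)}$ grows super-polynomially (for large $s$ the function $L$ is only linear, since $-\log G_r(x,1-e^{-s})\sim (x-r+1)s$), so it cannot be absorbed into the polynomial $p_r(x_1,s)$ outside a bounded range; and in the tail the crude bounds you invoke ($G_r\le 1$, $q_r(t)e^{-t}\le 1$) give a non-integrable majorant, while ``the decay of $e^{-As^r}$'' is unavailable there because the comparison with $e^{-As^r}$ is precisely what has broken down. Even granting that some truncation could be made to work asymptotically, the statement is an exact inequality with the specific coefficients $\binom{x_1}{i}\tfrac1r\Gamma(\tfrac{i+1}{r})\,s_r^{-(i+1)/r}$, and an absorb-the-error argument has no mechanism to reproduce them.

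The paper's proof supplies the missing idea: a change of variables rather than a pointwise comparison in the original variable. Setting $y=(L(s)/s_r)^{1/r}$ gives $\EE(K_r\zwei)/n=\int_0^\infty e^{-s_ry^r}\,\tfrac{rs_ry^{r-1}}{L'(s(y))}\,dy$, so the object to be bounded pointwise is the Jacobian, not the integrand. Since $L'(s)=r\sum_i\binom{x_i}{r}(e^s-1)^{r-1}/p_r(x_i,e^s-1)$ and $x_1$ is maximal, one has $L'(s)\ge rs_r(e^s-1)^{r-1}/p_r(x_1,e^s-1)$; combining this with $y\le s(y)$ (again \autoref{lema2}) and the fact that $s\mapsto p_r(x_1,e^s-1)/(e^s-1)^{r-1}$ is decreasing yields $\tfrac{rs_ry^{r-1}}{L'(s(y))}\le y^{r-1}p_r(x_1,e^{y}-1)/(e^{y}-1)^{r-1}\le p_r(x_1,y)$, and termwise integration produces exactly the stated sum of Gamma terms. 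The proof of \eqref{eq5} is the same with $q_r$ and \autoref{lema3} in place of $p_r$ and \autoref{lema2}. Without this substitution your outline does not constitute a proof.
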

\begin{proof}
\ref{eq4} (Sketch) Let  $I=\EE(K_r\zwei)/n$ and let $L(s):=-\sum_{i=1}^m \log(G_r(x_i,s))$.\\$s\mapsto L(s)$ is for $s>0$ strictly increasing.
Substitute $y=(L(s)/s_r)^{1/r}$ then
 $$ I=\int_0^\infty e^{-s_r y^{r}}\,\frac{r s_r y^{r-1}}{L^\prime(s(y))}\,dy$$
We have $L^\prime(s)=r \sum_{i=1}^m {x_i \choose r} (e^s-1)^{r-1}\left(p_r(x_i,e^s-1)\right)^{-1}$. Thus
 $$\tfrac{ L^\prime(s)}{rs_r(e^s-1)^{r-1}}\geq \tfrac{1}{p_r(x_1,e^s-1)}$$
and   $$ I\leq\int_0^\infty e^{-s_r y^{r}}\,\frac{ y^{r-1}}{(e^{s(y)}-1)^{r-1}}\,p_r(x_1,e^{s(y)}-1)\,dy$$
Finally $y< s(y)$ (cmp.\autoref{lema2}  and $s\mapsto p_r(x_1,e^s-1)/(e^s-1)^{r-1}$ is strictly decreasing so that
$$ I\leq\int_0^\infty e^{-s_r y^{r-1}}\,\frac{ y^{r-1}}{(e^{y}-1)^{r-1}}\,p_r(x_1,e^{y}-1)\,dy
<\int_0^\infty e^{-s_r y^{r-1}}\,p_r(x_1,e^{y}-1)\,dy$$
The result now follows by termwise integration.\\
For \ref{eq5} the proof is similar. 
\end{proof}

\begin{exl}
Let us compare the bounds from \autoref{thml} resp. \autoref{thmu2} for the case $r=2$ to some existing bounds in the literature.\\[0.1cm]
(a1) The classical birthday problem.($n=m,x_1=\ldots x_m=1)$. Here we find
$$0< \EE(R_2) -\sqrt{\tfrac{\pi}{2}\, m}< 1$$
Wiener (Theorem 2) gives:
$$-\tfrac{2}{5}< \EE(R_2)-\sqrt{\tfrac{\pi}{2}\, m}<\tfrac{8}{5}$$
while the exact best bounds are known to be (\cite{Sg},\cite{Stong})
$$\tfrac{2}{3}<  \EE(R_2)-\sqrt{\tfrac{\pi}{2}\, m} \leq 2-\sqrt{\tfrac{\pi}{2}} $$
Clearly all of these bounds are comparable. In this case even the complete asymptotic expansion of $\EE(R_2)$ is known
(\cite{Kn}).\\[0.1cm]
(a2) The birthday problem with unequal probabilities.$ (n=m,x_1+\ldots x_m=m)$. Let $p_i:=\tfrac{x_i}{m}$ and let $\beta(p):=\tfrac{1}{\sum_{i=1}^m p_i^2}$, $p_1:=\max\{p_i\}$.
Here we find $$0< \EE(R_2) -\sqrt{\tfrac{\pi}{2}\,\beta(p)}< p_1\,\beta(p)$$
Wiener (Theorem 4) gives:
$$\sqrt{\tfrac{\pi}{2}\,\beta(p)}-\tfrac{2}{5}< \EE(R_2)<2 \sqrt{\beta(p)}$$
Here the lower bounds are comparable while our upper bound is clearly better.\\[0.1cm]
(b) The collision time for a concrete $(n,m)$- function. Here we find
$$\tfrac{n+1}{\sqrt{s_2+1}}\tfrac{\sqrt{\pi}}{2}<\EE(K_2\eins) < \tfrac{n}{\sqrt{s_2}}\tfrac{\sqrt{\pi}}{2}+\tfrac{nx_1}{2s_2}$$
while Wiener (Theorem 8) gives for $n>m\geq 1$, translated into our notation:
$$(e-2)\sqrt{\tfrac{n(n-1)}{2s_2}}< \EE(K_2\eins)\leq 2\ \sqrt{\tfrac{n(n-1)}{2s_2}}$$
Here both our bounds are preferable. 

\end{exl}

Again some remarks are in order:\begin{enumerate}
\item observe that the bounds in \autoref{thml} resp. \autoref{thmu2}  are very close in the sense : upper bound = lower bound + terms of at most the same order.
The expectations of $K_r$ resp. $R_r$ are always of the order $\tfrac{n}{(s_r)^{1/r}}$ resp. $\tfrac{n}{(\tilde{s}_r)^{1/r}}$.
\item the expectation of the $r$-collision time for a fixed configuration with given $s_r$ is comparable to the
expectation of the $r$-collision time of a uniform random mapping with ``effective" image size $m_r=\tfrac{n^r}{s_r}$ ( this was for $r=2$ already remarked by Wiener).
In the sequel we will see that this analogy goes very far. 
\item the expectations of the $r$-collision times are always of the same order of magnitude 
\item in contrast, even the orders of magnitude of $\EE(K_r)$ and $\EE(R_r)$ can be different. The reason is that every $x_i>0$ adds to $\tilde{s}_r$ while only the $x_i\geq r$ add to $s_r$.
While the $r$-repetition time can maximal be of order $(\min\{m,n\})^{(r-1)/r}$, the $r$-collision times can be of order $n$. 
\end{enumerate}

\section{Limit theorems}
What is the typical shape of the distribution of the waiting times for large domains/codomains? This question is answered by limit
theorems.
We consider in the sequel a sequence $(h_N)$ of mappings with corresponding parameters $(n,m,(x_i))=(n(N),m(N),(x_i(N))$ varying with $N$ (it is clear that the distributions only depend on these
parameters), and the corresponding waiting time distributions.

For the formulation we assume w.l.o.g. that 

$$ x_1\geq x_2 \geq x_3 \geq\ldots$$
It will be become apparent that limit theorem can conveniently be formulated using the following
characteristics :

$$s_r:=\sum_{i=1}^m {x_i \choose r}\;,\; m_r=\frac{n^r}{r!\,s_r} \;\;\mbox{ and } \rho_i:=\left(\frac{{x_i \choose r}}{s_r}\right)^{1/r}$$
``Asymptotical"  means:  $m_r(N)\lra \infty$ and/or $s_r(N)\lra \infty$  as  $N\lra \infty$.\\ ( In particular in all asymptotical considerations 
$n(N)\lra \infty$.)\\

\begin{thm}\label{thmasym} {(asymptotical distribution of the collision times)}\\
Let $N\lra \infty$ and assume that for each $i$ the limit $\rho_i=\lim\rho_i(N)$ exists.
Then limiting distributions of the collision times exist and are as follows:
\begin{enumerate}
\item if $s_r(N)\lra \sigma <\infty , m_r(N)\lra \infty $ we have : \\ asymptotically, there are only finitely many $i$ with $x_i\geq r$, for each of these $i$ the 
 limit $\lim x_i(N)=x_i<\infty$ exists
and G
$$\begin{array}{rrcll}(a)&\;\;\PP(K_r\eins>nt)&\lra& \prod_i G_r(x_i,t)\;\;&\mbox{ for }0\leq t < 1\\[0.6cm]
(b)&\;\;\PP(K_r\zwei>nt)&\lra& \prod_i G_r(x_i,1-e^{-t})\;\;&\mbox{ for }0\leq t < \infty\end{array}$$\\[0.1cm]
\item  if  $s_r(N)\lra \infty , m_r(N)\lra \infty $ we have (in both cases): 
$$\;\;\PP(K_r> (m_r)^{(r-1)/r}\; t)\lra e^{-(1-\sum_i \rho_i^r)t^r/r!} \prod_i e^{-\rho_it} q_r(\rho_i t)\;\;\mbox{ for }0\leq t < \infty$$\\[0.1cm]
\item  if $s_r(N)\lra \infty , m_r(N)\lra \lambda<\infty $ we have (in both cases) :\\  for each $i$ the limit
$\lim \frac{x_i(N)}{n(N)}=:p_i$ exists , $\sum_i p_i=1$ and for each  $k\in\mathbb{N}$:
$$ \PP(K_r>k)\lra k!\,[t^k] \prod_i q_r(p_it) e^{-p_it}$$
\end{enumerate}

\end{thm}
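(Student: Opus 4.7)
My plan is to reduce each regime to an asymptotic coefficient extraction from the exact formulas of \autoref{thmprgen}, using the integral representations of \autoref{thmgenrep} where convenient.

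\textbf{Case (1).} The assumption $s_r(N)\to\sigma<\infty$ forces all but finitely many $x_i(N)$ to be less than $r$, and each of the remaining $x_i(N)$ to stabilize at a finite integer. Since $p_r(x,t)=(1+t)^x$ whenever $x<r$, the product in \autoref{thmprgen}(1) factorises as $(1+t)^{n-N_r}\prod_{i:x_i\geq r}p_r(x_i,t)$ with $N_r:=\sum_{x_i\geq r}x_i$. With $k=\lfloor nt\rfloor$ and $t\in[0,1)$, the multivariate hypergeometric joint law of $(Y_i^{(1)}(k))_{i:x_i\geq r}$ converges to a product of independent $\mathrm{Bin}(x_i,t)$, whence
$$\PP(K_r^{(1)}>\lfloor nt\rfloor)\longrightarrow\prod_{i:x_i\geq r}\PP(\mathrm{Bin}(x_i,t)\leq r-1)=\prod_iG_r(x_i,t).$$
Part (b) is analogous: a fixed ball of colour $i$ is drawn at least once in $\lfloor nt\rfloor$ with-replacement draws with probability $1-(1-1/n)^{\lfloor nt\rfloor}\to 1-e^{-t}$, and for fixed $x_i$ the $x_i$ indicators decouple, yielding $\mathrm{Bin}(x_i,1-e^{-t})$.

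\textbf{Case (3).} Since $m_r(N)\to\lambda<\infty$ the scale $m_r^{(r-1)/r}$ is bounded, and the substantive statement concerns fixed $k\in\mathbb{N}$. As $n\to\infty$ with $x_i/n\to p_i$, $(Y_i^{(1)}(k))$ converges to a multinomial$(k;p)$ vector; for sampling with replacement the same limit holds since the probability of any repeat among $k$ fixed draws is $O(k^2/n)$. Thus $\PP(K_r>k)$ tends to the probability that a multinomial$(k;p)$ sample has all entries $\leq r-1$, which equals $k!\,[t^k]\prod_iq_r(p_it)$ (the coefficient extractor enforces $\sum j_i=k$).

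\textbf{Case (2).} This is the substantive step. Using the factorisation $p_r(x,z)=(1+z)^xG_r(x,z/(1+z))$ together with $G_r(x,s)=1-\binom{x}{r}s^r+O((xs)^{r+1})$ (valid as $xs\to 0$), the integrand of \autoref{thmgenrep}(2) admits the asymptotic factorisation
$$e^{-ns}\prod_ip_r(x_i,e^{us}-1)\;\approx\;e^{n(u-1)s}\exp\!\Bigl(-(e^{us}-1)^rs_r\bigl(1-\sum_{i\in H_N}\rho_i(N)^r\bigr)\Bigr)\prod_{i\in H_N}p_r(x_i,e^{us}-1),$$
where $H_N:=\{i:\rho_i(N)\geq\varepsilon_N\}$ is the set of heavy cells (finite in the limit) and $\varepsilon_N\to 0$ is a slowly decaying truncation. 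Setting $c:=m_r^{(r-1)/r}$ and $k=\lfloor ct\rfloor$, Laplace's method applied to the resulting integral locates a saddle at $s=\Theta(m_r^{-1/r})$, so $x_i(e^{us}-1)\to\rho_iut$ for heavy cells; a subsequent coefficient extraction $[u^k]$ by contour integral around $|u|=1$ then produces the claimed limit $e^{-(1-\sum_i\rho_i^r)t^r/r!}\prod_ie^{-\rho_it}q_r(\rho_it)$ (the exponential $e^{-\rho_it}$ emerging from the residual normalisation $n$-weight on heavy cells). The identical limit for $K_r^{(1)}$ follows from the fact that $\frac{k!(n-k)!}{n!}$ and $\frac{k!}{n^k}$ share asymptotics at the relevant scale.

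The hard part is the uniform control in case (2): one must choose $\varepsilon_N\to 0$ so that $|H_N|$ stays bounded yet $\sum_{j\in L_N}\rho_j(N)^r\to 1-\sum_i\rho_i^r$, and must bound the Taylor remainder in $\log p_r(x_j,z)$ uniformly over light cells. A Poisson-limit argument in the style of Kolchin, Sevastyanov and Chistyakov (\cite{Ko}), adapted to accommodate a finite set of heavy cells, should discharge this.
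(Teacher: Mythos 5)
Your cases (1) and (3) are essentially correct but follow a more directly probabilistic route than the paper: for (1) the paper does not pass through convergence of the hypergeometric law but writes $\PP(K_r\eins>k)$ exactly as $\sum_{i=0}^{a_r}q_i{k\choose i}\tfrac{i!(n-i)!}{n!}$ with coefficients $q_i$ that stabilize, and lets the ratios of binomial coefficients converge to powers of $t$; and it does not treat (3) separately but folds it into the same machinery as (2). One thing you must not pass over silently in (3): your multinomial computation gives $k!\,[t^k]\prod_i q_r(p_it)$, which is \emph{not} the printed limit $k!\,[t^k]\prod_i q_r(p_it)e^{-p_it}$ (with $\sum_ip_i=1$ the latter equals $k!\,[t^k]e^{-t}\prod_iq_r(p_it)$, which for a constant mapping and $k=1<r$ gives $0$, and can even be negative). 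Your formula is the correct one --- it matches the exact identity $\PP(R_2>k)=\tfrac{k!}{n^k}\,[t^k]\prod_iq_r(x_it)$ --- so the factor $e^{-p_it}$ in the statement appears to be an error, but a proof has to flag the discrepancy rather than substitute a different formula without comment.

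The genuine gap is case (2), which carries all the weight and which you yourself leave open (``the hard part \dots should discharge this''). The heavy/light truncation, the uniform Taylor-remainder bound over light cells, the Laplace analysis of the $s$-integral and the subsequent coefficient extraction in $u$ are each only gestured at, and their interaction --- two nested asymptotic operations whose errors must be controlled simultaneously --- is exactly where such arguments break. Two concrete symptoms: your saddle location $s=\Theta(m_r^{-1/r})$ is inconsistent with the scale $k\asymp m_r^{(r-1)/r}$ except when $n\asymp m_r$ (indeed the printed normalisation only reproduces the Weibull factor $e^{-t^r/r!}$ for $r=2$; the consistent scale is $m_r^{1/r}=n/(r!\,s_r)^{1/r}$, and a careful proof must notice this); and the claim that $K_r\eins$ and $K_r\zwei$ share a limit because $\tfrac{k!(n-k)!}{n!}$ and $\tfrac{k!}{n^k}$ ``share asymptotics'' is false in general, since their ratio is $(n)_k/n^k\approx e^{-k^2/2n}$, which does not tend to $1$ when $s_r=O(n)$ --- this very ratio is what separates the limit of $R_2$ from that of $K_2\eins$; the agreement of the two collision-time limits comes from the compensating change of argument $t\mapsto e^t-1$ inside the product, not from the prefactors. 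The paper avoids all of this by a cleaner reduction: it introduces auxiliary variables $T_r\eins(N),T_r\zwei(N)$ with survival functions $Q_N(\tfrac{t}{1-t})$ resp.\ $Q_N(1-e^{-t})$, where $Q_N(t)=\prod_iG_r(x_i(N),t)$ is the survival function of a minimum of independent order statistics; it proves a general equivalence between convergence in distribution of $c_NT_N$ and convergence of $c_Ng_N(e^{-pc_N})$ for the tail generating function $g_N$; and it then uses the exact integral representations of \autoref{thmgenrep} together with dominated convergence (\autoref{propas}) to show that the rescaled $K_r(N)$ inherits whatever limit $T_r(N)$ has. This reduces (2) and (3) to the deterministic limit of $-\log\prod_iG_r(x_i,\cdot)$ at the appropriate scale, where the heavy/light splitting is an elementary computation with no uniformity issues. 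To salvage your route you would in effect have to prove the dominated-convergence step you are implicitly using, at which point you have reconstructed the paper's argument.
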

\begin{proof} 1.(a) suppose $\lim \rho_i(N)$ exists for each  $i$ and that $s_r(N)\lra \sigma<\infty$.
Let  $h_r(N):=\max\{i\;: x_i(N)\geq r\}$ the number of higher occupied cells, let $a_r(N):=\sum_{i=1}^{h_r(N)} x_i(N)$ the number of balls in higher occupied cells and 
let $$H_N(t):=\prod_{i=1}^{m(N)} p_r(x_i(N),t) (1+t)^{-x_i(N)}$$
 Since $\sigma<\infty$ asymptotically
 there are  only finitely many $i$ with $x_i\geq r$. Thus $h_r=\lim h_r(N)$ exists and is finite, and for each $i\leq h_r$ the limit $x_i(N)$ exists and is finite.
 Thus there
is an $N_0$ s.th. $x_i(N)$ is constant for $i\leq h_r$ and $N\geq N_0$. Hence $a_r(N)=:a_r$ is also constant for $N\geq N_0$. For $N\geq N_0$ then
$Q_N(t):=H_N(\frac{t}{1-t})$ is a polynomial of degree  $a_r$ in $t$ and we have 
$$ \PP(K\eins>k)=\xnk Q_N(\frac{x}{1+x})\left(1+x\right)^n=\sum_{i=0}^{a_r} q_i { k \choose i} \frac{i!\,  (n-i)!}{n!}$$
where the coefficients $q_i$ of $Q_N$ depend only on the $x_i$ with $x_i \geq r$ and hence are constant for $N>N_0$. For $\frac{k}{n}\lra t\in (0,1)$ the quotients of the
binomial coefficients converge to corresponding powers of $t$. Thus
$$ \PP(K_r\eins>nt)\lra Q(t)=H(\frac{t}{1-t})\;\;\;(n\lra \infty)$$
The proof for (b) is analogous.\\
(2)(3) Let $Q_N(t):=\prod_{i=1}^{m(N)} G_r(x_i(N),t)$ (so that $1-Q_N(t)$ is the distribution function of $\min\{X_1,\ldots,X_m\}$ where
the $X_i$ are independent, and $X_i$ is distributed as the $r-$th order statistic of $x_i$ independent variables uniform on $[0,1]$). 
and let  $T_r\eins(N)$ resp. $T_r\zwei(N)$ be random variables with $\PP(T_r\eins(N) >t)=Q_N(\frac{t}{1-t})$ resp.  $\PP(T_r\zwei(N)>t)=Q_N(1-e^{-t})$.
 We show that the limits of the distributions of  $T_r\eins(N)$ resp. $T_r\zwei(N)$ appear as limits of the distributions of $K_r\eins(N)$ resp. $K_r\zwei(N)$
(thus reducing to the case of the asymptotic distribution of the minimum of independent random variables)\\

We first show  two auxiliary results:
\begin{prp}
Let  $T_N$ be a sequence of integer-valued non-negative random variables with corresponding generating functions $g_N(u):=\sum_{i=0}^\infty u^i \PP(T_N>i)$ and $c_N$ a sequence of positive numbers with $c_N\lra 0$. Then ${c_N T_N}$ converges in distribution to a random variable $T$ with $\PP(T>t)=G(t)$ iff for each $p>0$:
$$c_N\,g_N(e^{-{p}{c_N}})\lra \int_0^\infty e^{-py}\, G(y)\,dy$$
\end{prp}
\begin{proof}
Let $f_N$ denote the probability generating function of $T_N$, so that $g_N(s)=\frac{1-f_N(s)}{1-s}$, and let $\ell_T(p):=\EE(e^{-pT})$ denote the Laplace transform of $T$.\\
By the continuity theorem for Laplace transforms $T_N\lra T$ in distribution iff $\ell_{T_N}(p)\lra \ell_T(p)$ for each $p>0$.
"$\Rightarrow$": let $c_NT_N\lra T$ in distribution. Then $\ell_{c_N\,T_N}(p)=f_N(e^{-p c_N})\lra \ell_T(p)$ for each $p\geq 0$. Hence for $p>0$
$$ c_N\, g(e^{-p c_N})=\frac{(1-f_N(e^{-p c_N}))\,c_N}{(1-e^{-p c_N})}\lra \frac{1-\ell_T(p)}{p}=\int_{0}^\infty e^{-py} G(y)\,dy$$
"$\Leftarrow$": let for each $p>0$
$$c_N g(e^{-p c_N})=\frac{(1-f_N(e^{-pc_N}))\,c_N}{(1-e^{-p c_N})}\lra \frac{1-\ell_T(p)}{p}$$
Then for each $p>0$
$$1-f(e^{-p c_N})=c_N\,{g(e^{-p c_N})}\frac{(1-e^{-p c_N})}{c_N}\lra \frac{1-\ell_T(p)}{p}\,p=1-\ell_T(p)$$

\end{proof}
We have for $|u|<1$:

$$g_{r,N}\eins(u)=:\sum_{k=0}^\infty u^k \PP(K_r\eins(N)>k)=(n+1)\,\int_0^{\infty} \left(1+(u-1)\frac{y}{1+y}\right)^{-(n+2)}\,Q_N(uy)\,dy$$
and 
$$g_{r,N}\zwei(u):=\sum_{k=0}^\infty u^k \PP(K_r\zwei(N)>k)=n\,\int_0^\infty e^{-n({1-u})y}\, Q_N(1-e^{-uy})\,dy$$

Combining this with the foregoing proposition gives:
\begin{prp}\label{propas}
Let $s_N$ be a sequence of positive numbers s.th $s_N\lra \infty$,\;$\frac{s(N)}{n(N)}\lra 0$, and let $T$ be a nonnegative random variable. Then we have:\begin{enumerate}
\item if $s_N\,T_r\eins(N)$ converges in distribution to $T$ then $\frac{s_N}{n(N)+1}\,K_r\eins(N)$ converges in distribution to $T$.
\item if $s_N\,T_r\zwei(N)$ converges in distribution to $T$ then  $\frac{s(N)}{n(N)}\,K_r\zwei(N)$ converges in distribution to $T$.
\end{enumerate}
\end{prp}
\begin{proof} Let $0\leq u<1$ and for $t\geq 0$ let $G(t):=\PP(T>t)$. 

Let $c_N:=\frac{s_N}{n+1}$ and let $u:=e^{-p c_N}$. Then
$$c_N g_{r,N}\eins(e^{-p c_N})=\int_0^\infty \left(1+(e^{-c_Np}-1)\frac{y}{s_N+y}\right)^{n+2}
 \,Q_N(e^{-c_Np} y/s_N)\,dy$$
Since $s_NT_r\eins(N)\lra T$ in distribution and since $c_N\lra 0$ also $e^{c_Np}s_N\,T_r\eins(N)\lra T$. 
Thus $Q_N(e^{-c_Np} y/s_N)\lra G(y)$ at all continuity points $y$ of $G$. Further $$(1+(e^{-p c_N}-1)\frac{y}{s_N+y})^n\lra e^{-py}\;\;.$$ Moreover, the convergence is dominated since
 each $Q_N\leq 1$ and
$$(n+2)\log(1-(1-e^{-c_Np})\frac{y}{y+s_N})\leq -(n+2)(1-e^{-c_Np})\frac{y}{s_N+y}\leq -(n+2)\,c_Np\,e^{-c_Np}\frac{y}{s_N+y}$$
and therefore we can find a constant $c>0$ s.th.for $y\geq 1$
$$(n+2)\log(1-(1-e^{-c_Np})\frac{y}{y+s_N})\leq -cyp$$
The dominated convergence theorem and proposition 6.2. now give the assertion.\\
The proof for (2) is similar.

\end{proof}

Now let us prove (2) and (3):\\
assume that for each $i$ the limit $\lim \rho_i(N)=\rho_i$ exists, and that $s_r(N)\lra \infty$. It is then a routine matter 
to show that for $s_N:=s_r(N)$ the random variables $s_N T_r(N)$ converge to the given distributions, and the additional assertions for (3) are also easy to show. 
An application of \autoref{propas} finishes the proof.

\end{proof}

Again some remarks are in order:
\begin{enumerate}
\item thus there are essentially three different types of limiting distributions and one will expect that for large domains and codomains the description by one of the types applies. Roughly speaking type (1) describes ``almost injective" functions
($n<<m$), type (2) describes ``normal" functions ($n\approx m$) and type (3) describes ``very surjective" functions

\item in any case $n/(s_r)^{1/r}$ is the correct measure  for the appearance of the first collisions.
The form of the distribution is easy to understand: the preimages which are small compared to this measure add to the ``Weibull" factor, while the ``large" preimages give the other factors. For ``normal" mappings all preimages will be small compared
to $n/(s_r)^{1/r}$, and the limiting distribution the Weibull-$r$ distribution.

\item  for the limiting distributions of $R_r$ a completely analogous assertion as in part (2) holds (as may be deduced from theorems given by Camarri \cite{Ca} (case $r>2$),
Camarri\& Pitman\cite{CP} (case $r=2$)):\\
if $v_r:=(\sum_{i=1}^m x_i^r)\lra \infty$, $\tilde{m}_r:=n^r/v_r\lra \infty$  and for each $i$ 
the limit $x_i/(\tilde{m}_r)^{1/r}=\theta_i$ exists, then
$$\;\;\PP(R_r> (\tilde{m}_r)^{(r-1)/r}\; t)\lra e^{-(1-\sum_i \theta_i)t^r/r!} \prod_i e^{-\theta_it} q_r(\theta_i t)\;\;\mbox{ for }0\leq t < \infty$$
Of course, both limits (i.e. $\theta_i$ and $\rho_i$) can exist at the same time and need not necessarily be the same.
 The special case of this theorem for uniformly small cells (i.e. $\theta_1=0$) is (at least implicitly) known for a long time.(See \cite{Ko}, Theorem 1 in III,\S 3 )

\item
It can be shown that in case (3) $R_r$ has the same limit as $K_r$.
\item
since $\tilde{m}_r\leq\min\{m^{r-1},n^{r-1}\}$ an analogon to Thm 6.1,(1) cannot exist for $R_r$.
\item in cases (2) and (3) there is thus asymptotically no difference between drawing with or without replacement.
In case (3) there is also never a  difference between repetitions and collisions.  
\end{enumerate}

\section{Supplementary Considerations}
\subsection{Asymptotic of $n/(s_r)^{1/r}$} 
It is clear from the above that $n/(s_r)^{1/r}$ characterises the behaviour of the $r$-collision times. How is the
value of $s_r$ distributed over all $(n,m)$-mappings? We consider again a uniform multinomial configuration
$(X_1,\ldots,X_m)$ with parameters $n$ and $p_1=\ldots=p_m=\frac{1}{m}$ and let $$S^{(n,m)}_r=\sum_{i=1}^m{X_i \choose r}$$
the random variable ``no. of $r$-collisions".
\begin{prp}\label{prpcon}
If $n,m \lra \infty$ s.th. $\frac{m^{r-1}}{n^r}\lra 0$ then
$$\frac{m^{r-1}}{n^r} S^{n,m}_r\lra \frac{1}{r!}\;\; \mbox{in distribution}$$
\end{prp}
\begin{proof}
 Calculations show that
\begin{equation} \label{expcoll}
\EE(S^{(n,m)}_r)=\frac{1}{m^{r-1}}{n\choose r} \end{equation} 
$$\var((S^{(n,m)}_r)=\left(\sum_{i=0}^{r-2}{r \choose i}{n-r \choose i}\frac{1}{m^{i}}+\left[{n-r+1 \choose r}
 -{n \choose r}\right]\frac{1}{m^{r-1}}\right) \EE(S^{(n,m}_r)$$
It is now easy to see that $\EE(\frac{m^{r-1}}{n^r}S^{(n,m)}_r)\lra \frac{1}{r!}$ and
$\var( (\frac{m^{r-1}}{n^r}S^{(n,m)}_r)\lra 0$

\end{proof}
Thus if $n$ is large compared to $m^{(r-1)/r}$ the effort for $r$-collision search will  
practically for every $(n,m)$ function be of order $m^{(r-1)/r}$.
\subsubsection{Balance measures}
For uniform random mappings each colour $i$ will appear with the same probability. The idea to measure the distance 
of a configuration from uniformity by a ``balance measure" is near at hand. The classical statistic in this respect
is the test-statistic of the $\chi^2$ test, which in the case of a $(n,m)$ uniform random mapping is:
$$T:=\frac{m}{n}\sum_{i=1}^{m} (X_i-\frac{n}{m})^2=\left(\frac{m}{n}\sum_{i=1}^m X_i^2\right)-n$$
We know from the above that $n^2/S^{(n,m)}_2$ (or variants thereof) measures the performance of $2$-collisions 
attacks.
We have
$$T=m-n+\frac{2m}{n}S_2^{(n,m)}$$
Bellare and Kohno suggest to use
$$\mu_2:= -\log_m(\sum_{i=1}^m X_i^2/n^2)$$
to quantify resistance against a generic collision attack.
This balance measure is related to $2$-repetitions
rather then to $2$-collisions (this was earlier remarked by Wiener \cite{Wi} and by Ramanna and Sarkar \ci{RS}).
Clearly $\mu_2$ is a simple variant of the $\chi^2$ statistic $T$:
$$\mu_2=1-\log_m(1+T/n)$$
It is well known that $\EE(T)=m-1$, $\var(T)=(1-1/n)(2m-2)$. Thus if $m,n\lra \infty$ s.th. $m/n^2\lra 0$ then
$Var(T/n)\lra 0$ and $\mu_2\approx 1-\log_m(1+\frac{m-1}{n})$. E.g for  $n=a m$ and large $m$ this measure will
have the value $\mu_2=1-\log(1+1/a)/\log(m)$ for practically every $(n,m)$-function.\\ 
Ramanna and Sarkar suggest to use
$$\Lambda_r:=-\frac{1}{r-1}\log_m(\frac{r!S_r^{(n,m)}}{n^r})$$
to quantify resistance against a generic $r-$collision attack.\\
By \autoref{prpcon}, if $m,n\lra \infty$ s.th. $m^{r-1}/n^r\lra 0$ then $\var(r!m^{r-1} S_r^{(n,m)}/n^r)\lra 0$. Thus if $m^{r-1}/n^r$ is small  
this measure will have the value 
$\Lambda_r\approx 1+\log_m(\frac{n^r}{(n)_r})$ for practically every $(n,m)$ function.\\
From a probabilistic view these measures fail to uncover irregularities in a uniform random function: their scaling is too coarse. They only detect
extreme deviations from uniform random behaviour.

\subsection{On the difference between drawing with/without replacement}
\subsubsection{Probability for true collisions}
In this subsection we only consider drawing with replacement and write $K_r$ for $K_r\zwei$.
Here it is of interest to know the probability $\PP(K_r=\min\{K_r,R_r\})$ that the first
$r$-hit is caused by a true multi-collision. Let $E_{i,r}$ the event: $i$ is the first colour which is drawn
$r$-times (i.e. $E_{i,r}=\{X_{\min\{K_r,R_r\}}=i\}$). It is easy to show that
$$\PP(K_r=R_r\,|\,E_{i,r})=\frac{(x_i)_r}{x_i^r}$$
If $\ell:=\min\{x_i\,|\, x_i\geq r\}$ and $M:=\max\{x_i\}$ we therefore have
$$\frac{(\ell)_r}{\ell^r}\leq \PP(K_r=R_r)\leq \frac{(M)_r}{M^r}$$
For $r=2$ we can be more precise:
\begin{thm}
Let $b:=|\{i\,:\,x_i>0\}|$ the number of occupied images. Then the following inequalities hold
$$ \frac{n}{\sum_{i=1}^m x_i^2}\leq \PP(R_2<K_2)\leq \frac{b}{n}$$
Equality (on both sides) holds iff all positive $x_i$ are equal.
\end{thm}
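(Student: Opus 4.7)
The starting point of the plan is the identity
\[ \PP(R_2<K_2) = \sum_{i:\,x_i\geq 1} \frac{p_i}{x_i}, \qquad p_i := \PP(I=i), \]
where $I$ is the colour of the first image-match: conditioning on $\{I=i\}$ makes the two matching preimages independent and uniform among the $x_i$ preimages of $i$, so they coincide with probability $1/x_i$. Both bounds will follow from a careful comparison of $(p_i)$ with the one-draw distribution $q_i := x_i/n$. A direct count of the event ``$R_2=k$, $H_{R_2}=i$'' followed by the same Laplace-transform manipulation as in \autoref{thmprgen} yields
\[ p_i = x_i^2 \int_0^\infty u\,e^{-nu}\prod_{c\neq i}(1+x_c u)\,du, \]
and a routine computation gives
\[ \frac{p_i}{q_i} - \frac{p_j}{q_j} \;=\; n(x_i-x_j)\int_0^\infty u\,e^{-nu}\prod_{c\neq i,j}(1+x_c u)\,du. \]
Hence $p_i/q_i$ is monotone increasing in $x_i$: $p$ has monotone likelihood ratio against $q$ in $x$-order.

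For the upper bound, order the colours so that $x_1\geq\cdots\geq x_b>0$. Then $(q_i)_{i\leq b}$ and $(p_i/q_i)_{i\leq b}$ are both non-increasing, and Chebyshev's sum inequality gives
\[ 1 \;=\; \sum_{i\leq b} q_i(p_i/q_i) \;\geq\; \frac{1}{b}\Big(\sum_{i\leq b} q_i\Big)\Big(\sum_{i\leq b} p_i/q_i\Big) \;=\; \frac{1}{b}\sum_{i\leq b} p_i/q_i, \]
so $\PP(R_2<K_2) = \tfrac{1}{n}\sum_i p_i/q_i \leq b/n$. Equality in Chebyshev forces one of the two sorted sequences to be constant, which is tantamount to all positive $x_i$ being equal.

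For the lower bound, first derive the auxiliary identity $\PP(R_2<K_2) = \EE\binom{R_2}{2}/n$. With $T$ the first preimage-duplicate time, $\{R_2<K_2\}=\{T=R_2\}$; and on $\{R_2\geq k\}$ the first $k-1$ preimages are distinct, so $\PP(D_k\in\{D_1,\dots,D_{k-1}\}\mid R_2\geq k) = (k-1)/n$, giving $\PP(T=k,R_2=k) = (k-1)\PP(R_2\geq k)/n$; summation yields the formula. Next, on every trajectory exactly one image-matching pair $(V,R_2)$ lies in $\{(i,j):i<j\leq R_2\}$, so with $A := \sum_i x_i^2$ and $C_{ij} := \{h(D_i)=h(D_j)\}$,
\[ 1 \;=\; \sum_{i<j}\PP(C_{ij},\,j\leq R_2) \;\leq\; \Big(\sup_{i<j}\PP(C_{ij}\mid j\leq R_2)\Big)\cdot \EE\binom{R_2}{2}. \]
It therefore suffices to show $\PP(C_{ij}\mid j\leq R_2)\leq A/n^2$; by independence of $D_j$ and exchangeability of positions $1,\dots,j-1$ this reduces to $\EE[x_{h(D_1)}\mid R_2\geq j]\leq A/n$. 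The conditional density of $h(D_1)$ on $\{R_2\geq j\}$ is proportional to $x_c\cdot Sym_{j-2}(\{x_{c'}:c'\neq c\})$, so its likelihood ratio against $q_c$ is proportional to $Sym_{j-2}(\{x_{c'}:c'\neq c\})$, which is \emph{decreasing} in $x_c$ by the symmetric-function identity
\[ Sym_k(\{x_{c'}:c'\neq c\}) - Sym_k(\{x_{c'}:c'\neq c^\ast\}) = (x_{c^\ast}-x_c)\,Sym_{k-1}(\{x_{c''}:c''\neq c,c^\ast\}). \]
Hence this conditional law is MLR-smaller than $q$ in $x$-order, and the increasing function $f(x)=x$ yields $\EE_{\mathrm{cond}}[x_{h(D_1)}]\leq\EE_q[x_{h(D_1)}] = A/n$. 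Combining gives $\EE\binom{R_2}{2}\geq n^2/A$, i.e.\ $\PP(R_2<K_2)\geq n/A$; equality again demands all positive $x_i$ to be equal.

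The main obstacle will be the pair of MLR statements, both of which rest on the symmetric-function identity above; once they are in place, the upper bound is pure Chebyshev and the lower bound is a counting identity combined with the conditional MLR inequality.
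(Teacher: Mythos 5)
Your proof is correct, but it takes a genuinely different route from the paper's on both inequalities, so a comparison is worthwhile. The common starting point is the identity $\PP(R_2<K_2)=\sum_i \PP(E_i)/x_i$ together with the integral representation $\PP(E_i)=x_i^2\int_0^\infty u e^{-nu}\prod_{c\neq i}(1+x_cu)\,du$ (both of which the paper also records). From there the paper stays inside the integral: it rewrites $n\,\PP(R_2<K_2)=\int_0^\infty\bigl(\sum_i\frac{p_is}{1+p_is}\bigr)e^{-s}\prod_i(1+p_is)\,ds$ and applies Jensen's inequality to the sum in the integrand in two opposite directions, each time landing on an integrand with the exact antiderivative $-e^{-s}(1+qs)^{1/q}$, which evaluates the bounds in closed form. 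Your upper bound instead works with the discrete law $(\PP(E_i))$ of the first-match colour: the difference formula $\frac{\PP(E_i)}{q_i}-\frac{\PP(E_j)}{q_j}=n(x_i-x_j)\int_0^\infty ue^{-nu}\prod_{c\neq i,j}(1+x_cu)\,du$ shows the likelihood ratio against the one-draw law $q$ is monotone in $x_i$ (with strict monotonicity, which is what makes your equality discussion go through), and then Chebyshev's sum inequality — equivalently $\EE_P[1/x_I]\le\EE_q[1/x_I]=b/n$ for the decreasing function $1/x$ — finishes it; this is arguably more transparent about *why* the bound is $b/n$. Your lower bound is the more substantial departure: the identity $\PP(R_2<K_2)=\frac1n\EE{R_2\choose 2}$ (which I verified, e.g. it reduces to $\EE{R_2\choose2}=n$ in the classical case) does not appear in the paper at all, and combining it with the a.s. count $1=\sum_{i<j}\PP(C_{ij},\,j\le R_2)$ and the negative-correlation estimate $\EE[x_{h(D_1)}\mid R_2\ge j]\le \sum_cx_c^2/n$ (via the elementary-symmetric-function identity) gives a purely combinatorial/probabilistic proof of $\EE{R_2\choose2}\ge n^2/\sum x_i^2$ with no integrals. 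What the paper's method buys is uniformity (one integral representation, two applications of Jensen, exact evaluation); what yours buys is probabilistic interpretability and, in the lower bound, a second-moment identity for $R_2$ that is of independent interest. The only places where I would ask you to add a line are (i) the justification that, given the colour of the first match, the two colliding preimages are conditionally i.i.d. uniform on that fibre (the conditioning event is measurable with respect to the colour sequence alone), and (ii) the verification that equality in the lower bound, when $b\ge2$, is already forced by the $j=3$ term of your sum, where $Sym_1$ makes the covariance strictly negative unless all positive $x_i$ coincide.
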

\begin{proof}(Sketch)
Using generating functions one can derive that
$$\PP(K_r=R_r)=\sum_{i=1}^m r{x_i\choose r} \int_0^\infty t^{r-1} e^{-nt}\,\prod_{j\neq i} q_r(x_jt)\,dt$$
Using the fact that $\sum x_i^2 t e^{-nt} \prod_{j\neq i} q_2(x_it)=-(\prod_{i=1}^m F_2(x_it))^\prime$
where $F_2(t)=q_2(t)e^{-t}$ then
$$\PP(R_2<K_2)=\sum_{i=1}^m x_i \int_0^\infty t\,e^{-nt} \prod_{i\neq j} q_2(x_it)\,dt$$

Let $p_i:=\frac{x_i}{n}$ and $q:=\sum_{i=1}^m p_i^2$ and rewrite the equality above as
$$\begin{array}{rcl}
n\,\PP(R_2<K_2)&=&\int_0^\infty \left(\sum_{i=1}^l \frac{p_i s}{1+p_is}\right) e^{-s}\,\prod_{i=1}^l (1+p_i\,s)\,ds\\
      &=
&\int_0^\infty \left(\sum_{i=1}^l \frac{p_i s}{1+p_is}\right) \exp(-s+\int_0^s \sum_{i=1}^l \frac{p_i}{1+p_it}\,dt)\,\,ds\\
            \end{array}$$
(1) By Jensen's inequality $\sum_{i=1}^l \frac{p_i}{1+p_i\,s}\geq \frac{1}{1+qs}$, and therefore:
$$\begin{array}{rcl}
n\,\PP(R_2<K_2)
      &=
&\int_0^\infty \left(\sum_{i=1}^m \frac{p_i s}{1+p_is}\right) \exp(-s+\int_0^s \frac{p_i}{1+p_it}\,dt)\,\,ds\\
&\geq& \int_0^\infty \frac{s}{1+qs} \exp(-s+\int_0^s \frac{1}{1+qt}\,dt)\,\,ds\\
&=& \int_0^\infty \frac{s}{1+qs} e^{-s} \left(1+qs\right)^{\frac{1}{q}}\,ds\\
&=& \frac{1}{q}
            \end{array}$$
(2) Another application of Jensen's inequality gives: $\sum_{i=1}^m \frac{p_is}{1+p_is}\leq \frac{s}{1+\frac{s}{b}}$. A similar
computation then shows 
$$\begin{array}{rcl}
n\,\PP(R_2<K_2)
      &\leq& b\;\;\;
            \end{array}$$ \end{proof} 
\subsubsection{Difference of expectations}

Next we look at the order of magnitude of the difference of the collision times. We already saw that it is always
preferable to use sampling without replacement, and that there is asymptotically no difference (on the $n/s_r^{1/r}$ scale)
when both $n$ and $s_r$ tend to infinity. Here we quantify the difference more precisely.

\begin{thm}
There is a constant $C_r$ (depending only on r) s.th.
$$\EE(K_r\zwei)-\EE(K_r\eins)< C_r n/(s_r)^{2/r}$$ 
\end{thm}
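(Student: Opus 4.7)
The plan is to imitate, step by step, the proof of \autoref{thmu2}, but now with one extra factor of $s$ in the integrand. Using the identity $e^{-ns}\prod_{i=1}^m p_r(x_i,e^s-1)=\prod_{i=1}^m G_r(x_i,1-e^{-s})=:H(s)$ (which follows from $\sum x_i=n$), the integral representations of the preceding proposition take the common-kernel form $\EE(K_r\eins)=(n+1)\int_0^\infty e^{-s}H(s)\,ds$ and $\EE(K_r\zwei)=n\int_0^\infty H(s)\,ds$. Subtracting yields
\begin{equation*}
\EE(K_r\zwei)-\EE(K_r\eins) = \int_0^\infty [n(1-e^{-s})-e^{-s}]\,H(s)\,ds < (n+1)\int_0^\infty s\,H(s)\,ds,
\end{equation*}
where I have used $1-e^{-s}\leq s$ and discarded the negative term $-e^{-s}$.

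Next I would apply the substitution $y=(L(s)/s_r)^{1/r}$ with $L(s):=-\log H(s)$, exactly as in the proof of \autoref{thmu2}. It converts $H(s)$ into $e^{-s_r y^r}$ and yields
\begin{equation*}
(n+1)\int_0^\infty s\,H(s)\,ds = (n+1)\int_0^\infty s(y)\,e^{-s_r y^r}\,\frac{rs_r y^{r-1}}{L'(s(y))}\,dy.
\end{equation*}
The lower bound $L'(s)\geq rs_r(e^s-1)^{r-1}/p_r(x_1,e^s-1)$, the inequality $s(y)\geq y$ (coming from $L(s)\leq s_r s^r$, cf.\ \autoref{lema2}), and the monotonicity of $s\mapsto p_r(x_1,e^s-1)/(e^s-1)^{r-1}$---all established in the proof of \autoref{thmu2}---then give
\begin{equation*}
\EE(K_r\zwei)-\EE(K_r\eins) < (n+1)\int_0^\infty s(y)\,\frac{y^{r-1}}{(e^y-1)^{r-1}}\,p_r(x_1,e^y-1)\,e^{-s_r y^r}\,dy.
\end{equation*}

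The final step is termwise integration. Using $y/(e^y-1)\leq 1$ and expanding $p_r(x_1,e^y-1)=\sum_{j=0}^{r-1}\binom{x_1}{j}(e^y-1)^j$ converts the integral into a sum of $r$ terms, each of the form $\binom{x_1}{j}\int_0^\infty s(y)\,y^{r-1}(e^y-1)^{j-r+1}e^{-s_r y^r}\,dy$. For $j\leq r-2$ the factor $(e^y-1)^{j-r+1}$ tames the growth of $s(y)$, and the resulting integral is of order $s_r^{-(j+2)/r}$; combined with $\binom{x_1}{j}\leq C_{j,r}\,s_r^{j/r}$ (which follows from $\binom{x_1}{r}\leq s_r$, hence $x_1\leq (r!\,s_r)^{1/r}$), this contributes $O(s_r^{-2/r})$. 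For $j=r-1$ the integrand becomes $s(y)\,y^{r-1}e^{-s_r y^r}$, and I would invoke the asymptotic $s(y)\sim y^r s_r/C$ in the large-$y$ regime---where $C:=\sum_{i\in M_r}(x_i-r+1)$---together with the combinatorial inequality $\binom{x_1}{r-1}/C\leq C_r\,s_r^{(r-2)/r}$ (which follows from $x_1\leq (r!\,s_r)^{1/r}$ and $C\geq x_1/r$) to again obtain an $O(s_r^{-2/r})$ contribution. Multiplying by $(n+1)$ yields the claim.

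I expect the main obstacle to be precisely the $j=r-1$ term, where no negative power of $(e^y-1)$ is available to tame the growth of $s(y)$ at large $y$; the super-exponential decay $e^{-s_r y^r}$ still wins, but extracting the correct dependence on $s_r$ requires the combinatorial estimate $\binom{x_1}{r-1}/C\leq C_r\,s_r^{(r-2)/r}$. This is the new ingredient beyond what is needed for \autoref{thmu2}, and it is what guarantees that the final constant $C_r$ depends only on $r$.
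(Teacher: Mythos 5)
Your opening reduction coincides with the paper's: the paper also starts from $0<\EE(K_r\zwei)-\EE(K_r\eins)<n\int_0^\infty(1-e^{-t})\prod_{i}G_r(x_i,1-e^{-t})\,dt$ and then uses $1-e^{-t}\leq t$. But from there the paper does \emph{not} rerun the substitution of \autoref{thmu2}; it invokes \autoref{lema4}, which collapses the whole product into a power of a single function, $\prod_iG_r(x_i,1-e^{-t})\leq\bigl(e^{-td/s_r}q_r(td/s_r)\bigr)^{rs_r^{r+1}/d^r}$ with $d=\sum_ix_i\binom{x_i}{r}$, and then finishes by rescaling $u=td/s_r$, the bound $rs_r^{r+1}/d^r\geq r^{-(r-1)}$, and the monotonicity of $c\mapsto c^{2/r}\int_0^\infty t\,(q_r(t)e^{-t})^c\,dt$. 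That lemma exists precisely to sidestep the difficulty your route runs into.

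The gap in your version is the control of $s(y)$ in the $j=r-1$ term. The relation $s(y)\sim y^rs_r/C$ with $C=\sum_{i\in M_r}(x_i-r+1)$ is only the $s\to\infty$ asymptotic of $L$; to make your termwise estimates produce $O(s_r^{-2/r})$ you implicitly need a \emph{global} bound of the shape $s(y)\leq A_r\bigl(y+s_ry^r/C\bigr)$, i.e. $L(s)\geq c_r\min\bigl(s_rs^r,\,Cs\bigr)$ for all $s>0$. This is false in general, because different cells cross over from the $\binom{x_i}{r}s^r$ regime to the $(x_i-r+1)s$ regime at different scales $s\approx 1/x_i$. Concretely, for $r=2$ take one cell of size $M$ and $N=M^{3/2}$ cells of size $2$; at $s=M^{-1/2}$ one has $L(s)\asymp\sqrt{M}$ while $\min(s_2s^2,Cs)\asymp M$, so no constant $c_2$ works. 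The offending region is far in the tail of $e^{-s_ry^r}$, so the final estimate may well survive a more careful splitting of the integral, but as written the step ``invoke the asymptotic $s(y)\sim y^rs_r/C$'' does not yield the claimed $O(s_r^{-2/r})$ bound, and this is exactly the point where a new ingredient (the paper chooses \autoref{lema4}) is required rather than optional.
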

\begin{proof} (Sketch) We have
$$0<\EE(K_r\zwei)-\EE(K_r\eins)<n\,\int_0^\infty (1-e^{-t}) \prod_{i=1}^{m} G_r(x_i,1-e^{-t})\,dt =:n\,I$$
By lemma 9.4 from the appendix $\prod_{i=1}^{m} G_r(x_i,1-e^{-t})\leq \left(e^{-td/s_r})q_r(dt/s_r)\right)^{rs_r^{r+1}/d^r}$
where $d=\sum_{i=1}^m x_i{x_i \choose r}$ leading after simple steps to
$$I\leq \frac{s_r^2}{d^2} \int_0^\infty t \left(e^{-t}q_r(t)\right)^{rs_r^{r+1}/{d^r}}\,dt$$
Using $rs_r^{r+1}/d^r\geq 1/r^{r-1}$ and observing that $c\mapsto c^{2/r}\int_{0}^\infty t \left(q_r(t) e^{-t}\right)^c\,dt$ is
decreasing finally shows that the assertion is true for
$$C_r=(1/r)^{2/r}\int_0^\infty t \left(e^{-t} q_r(t)\right)^{1/r^{r-1}}\,dt$$
\end{proof}
Thus the difference of expectations is $O(\frac{n}{(s_r)^{2/r}})$ (it is not hard to show that it is even $\Theta(\frac{n}{(s_r)^{2/r}})$ in Knuth's sense).
This gives an independent proof for the fact that $\frac{s_r^{1/r}}{n}(K_r\zwei-K_r\eins)\lra 0$ if $s_r\lra \infty$.

\subsection{Cyclic points/$\rho-$length for random fixed-indegree mappings}$ $\\
The distribution of $K\eins_2$ is closely related to two graph-theoretic distributions of random mappings.
Let ${\mathcal D}=\{ 1,\ldots,n \}$ and $f:\mathcal{D}\lra \mathcal{D}$ an arbitrary function.\\ The functional digraph $G_f$ of $f$ is the graph with 
vertices $\{1,\ldots,n\}$ 
and directed edges 
$\{(1,f(1),\ldots, (n,f(n))\}$.\\
 $G_f$ consists of a number of connected components, each of which contains a unique cycle. In this way graph-theoretic terms can be applied to $f$,
e.g. the cyclic points of $f$ are the cyclic points in $G_f$ etc.
The indegree $x_i$ of vertex $i$ in $G_f$ is 
 $x_i=|f^{-1}(\{i\})|$.\\
Let in the sequel $c:=(x_1,\ldots,x_n)$ be a fixed indegree sequence (``configuration"), and let $${\mathcal F}_c:=\{ f:{\mathcal D} \lra {\mathcal D}\;:\;|f^{-1}(\{i\})|=x_i \mbox{ for all }\;i\,\}$$ the set of mappings which share the same configuration. Let $Z(f)$ be the r.v. ``number of cyclic points of $f$" on ${\mathcal F}_c$, let $\rho(x,f)$ the r.v. on ${\mathcal D}\times {\mathcal F}_c$  ``$\rho$-length of $x$ under $f$,
let $K$ be the r.v. ``waiting time for the first collision" when drawing without replacement from an urn with configuration $c$ (i.e. $K=K\eins_2$, which was considered above).  
The following theorem holds:

\begin{thm}{Let $k\in\{0,\ldots,n\} $}.\\
(a) For the uniform distribution on ${\mathcal F}_c$ 
 $$\PP(Z> k) = \PP(K > k+1)$$
(b) For the uniform distribution on ${\mathcal D}\times {\mathcal F}_c$ $$\PP(\rho>k) = \frac{n-k}{n}\, \PP(K > k)$$
\end{thm}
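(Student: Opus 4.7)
My plan is to couple a uniform $f\in\mathcal{F}_c$ with an independent uniform $x_0\in\mathcal{D}$ and put $X_j:=f^j(x_0)$ for $j\geq1$. A short computation of the one-step transition probability for uniform $f\in\mathcal{F}_c$ shows that $\PP(X_{j+1}=i\mid x_0,X_1,\ldots,X_j)$ equals the residual urn probability (drawing one more ball of color $i$ from the remaining urn), and in particular does not depend on $x_0$. Hence $(X_j)_{j\geq1}$ has the law of the ball-color sequence from the urn and is independent of $x_0$. Since
\[\{\rho(x_0,f)>k\}=\{K>k\}\cap\{x_0\notin\{X_1,\ldots,X_k\}\},\]
and on $\{K>k\}$ the set $\{X_1,\ldots,X_k\}$ has exactly $k$ distinct elements, independence and uniformity of $x_0$ give $\PP(x_0\notin\{X_1,\ldots,X_k\}\mid K>k)=(n-k)/n$, which is (b).

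\textbf{Part (a), identity step.} I would first prove the combinatorial identity
\[\bigl|\{f\in\mathcal{F}_c:Z(f)>k\}\bigr|=(k+1)\sum_{f\in\mathcal{F}_c}C_{k+1}(f),\]
where $C_{k+1}(f)$ denotes the number of $(k+1)$-cycles of $f$. Partitioning $\mathcal{F}_c$ by $C:=\mathrm{cyc}(f)$ of size $z$, every $f$ with $\mathrm{cyc}(f)=C$ factors uniquely as a permutation $\sigma\in S_z$ of $C$ (namely $f|_C$) together with an acyclic $g:[n]\setminus C\to[n]$ realizing the residual indegrees ($x_v-1$ for $v\in C$, $x_v$ for $v\notin C$), a count $N_F(C)$ depending only on $C$. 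Thus $|\{f:\mathrm{cyc}=C\}|=z!\,N_F(C)$, while $\sum_{f:\mathrm{cyc}=C}(k+1)C_{k+1}(f)=N_F(C)\sum_{\sigma\in S_z}(k+1)C_{k+1}(\sigma)$. The classical fact $\EE_{\sigma\in S_z}[C_{k+1}]=1/(k+1)$ for $z\geq k+1$ (and $C_{k+1}\equiv 0$ for $z<k+1$) gives $\sum_{\sigma}(k+1)C_{k+1}(\sigma)=z!$ precisely when $z\geq k+1$; summing over $C$ matches both sides contribution-by-contribution.

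\textbf{Part (a), evaluation step.} The right-hand side counts pairs $(f,(v_0,\ldots,v_k))$ with $(v_0,\ldots,v_k)$ an ordered sequence of distinct vertices and $f(v_i)=v_{(i+1)\bmod(k+1)}$: fixing the sequence, the remaining $n-k-1$ edges can be chosen in $(n-k-1)!\prod_i x_{v_i}/\prod_v x_v!$ ways (multinomial coefficient). Summing over ordered sequences and using $\sum_{(v_0,\ldots,v_k)\text{ distinct}}\prod_i x_{v_i}=(k+1)!\,Sym_{k+1}(x_1,\ldots,x_n)$ yields
\[(k+1)\sum_f C_{k+1}(f)=\frac{(k+1)!\,(n-k-1)!\,Sym_{k+1}(x)}{\prod_v x_v!}=|\mathcal{F}_c|\cdot\PP(K>k+1),\]
the last equality using Corollary 4.3 and $|\mathcal{F}_c|=n!/\prod_v x_v!$. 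Dividing by $|\mathcal{F}_c|$ then proves (a). The main obstacle is the identity in the previous paragraph: the LHS has no obvious $(k+1)$-cycle structure, and one has to decompose $f\in\mathcal{F}_c$ by its cyclic set cleanly so that the classical permutation-cycle identity $(k+1)\sum_{\sigma\in S_z}C_{k+1}(\sigma)=z!$ applies exactly on the non-trivial range $z\geq k+1$ — without ever needing an explicit formula for the forest count $N_F(C)$.
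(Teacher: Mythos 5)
Your part (a) is correct and takes a genuinely different route from the paper. The paper proves (a) by a Foata--Fuchs style word encoding, i.e.\ an explicit bijection between the mappings in $\mathcal{F}_c$ with exactly $r$ cyclic points and the sequences with configuration $c$ whose first repeated value occurs at position $r+1$. You instead reduce $\PP(Z>k)=\PP(Z\geq k+1)$ to $(k+1)\EE(C_{k+1})$ via the decomposition $f\mapsto(\mathrm{cyc}(f),\,f|_{\mathrm{cyc}(f)},\,\mbox{forest})$ together with the classical identity $(k+1)\sum_{\sigma\in S_z}C_{k+1}(\sigma)=z!$ for $z\geq k+1$ (and $=0$ otherwise), and then count pairs (mapping, oriented $(k+1)$-cycle) directly; both steps are sound, the resulting count $\tfrac{(k+1)!\,(n-k-1)!\,Sym_{k+1}(x)}{\prod_v x_v!}$ does equal $|\mathcal{F}_c|\,\PP(K>k+1)$, and you indeed never need the forest count $N_F(C)$. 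This is arguably more robust than the paper's encoding (no delicate word bookkeeping), at the price of being non-bijective.

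Part (b) has a genuine gap: the claim that $(X_j)_{j\geq 1}$ with $X_j=f^j(x_0)$ has the law of the urn's colour sequence and is independent of $x_0$ is false. The one-step transition is the residual urn law only while $X_j$ is a fresh evaluation point, i.e.\ $X_j\notin\{x_0,X_1,\ldots,X_{j-1}\}$; once the orbit revisits a point the next value is deterministic. (Take $n=2$, $c=(1,1)$: then $X_2=f^2(x_0)=x_0$ always, so $\PP(X_2=X_1)=\tfrac12$, whereas the urn gives $0$.) Consequently both of your factors are individually wrong: the event $\{X_1,\ldots,X_k\mbox{ distinct}\}$ already forces $x_0\notin\{X_1,\ldots,X_{k-1}\}$ and has probability $\tfrac{n-k+1}{n}\PP(K>k)$, not $\PP(K>k)$, and conditionally on it $x_0$ is uniform on the $n-k+1$ points outside $\{X_1,\ldots,X_{k-1}\}$, so the conditional probability that $x_0$ also avoids $X_k$ is $\tfrac{n-k}{n-k+1}$, not $\tfrac{n-k}{n}$. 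The product happens to be correct, but the derivation as written does not establish it. The repair is either to compute directly $\PP(x_0=v_0,X_1=i_1,\ldots,X_k=i_k)=\tfrac1n\prod_{j=1}^{k}\tfrac{x_{i_j}}{n-j+1}$ for pairwise distinct $v_0,i_1,\ldots,i_k$ and sum over the $n-k$ admissible $v_0$ per tuple, or to do what the paper does: continue the list $(x_0,f(x_0),\ldots,f^r(x_0))$ past the first repeat by appending the values $f(y_i)$ of the remaining points in increasing order, which turns the coupling into an honest bijection in which the drawn sequence is a genuine urn sample independent of the starting point.
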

\begin{proof} We use  variants of Pr\"ufer-coding.\\
(a)(Foata-Fuchs) Each mapping from $\mathcal{F}_c$ with leaves $y_1<\ldots< y_k$ can uniquely be encoded as a list of words (where word :=list without repeated element) $w_0\ldots w_{k}$,
where $w_0$ consists of the cycles of $f$ (each coded as a word, starting with the largest element, and listing the remaining elements ``against"
the mapping direction, and these subwords concatenated in the ordering of their first elements), and the next word $w_i$ is the ``path" from $l_i$ to its root
$x_i$ in 
$w_0\ldots,w_{i-1}$, without $y_i$, but including it's root $x_i$ in $w_0\ldots w_{i-1}$, and listed against the mapping arrow, starting from $x_i$.\\ 
This coding gives a bijection between the set of mappings from ${\mathcal F}_c$ with $r$ cyclic points and the set of sequences with configuration $c$
and first repeated value at $r+1$.\\ 
(b)  Each of the $n {n \choose x_1,\ldots x_n} $ possible choices of $x,\,f$ ($x$ starting point, $f\in \mathcal{F}_c$  mapping) can be encoded as a
sequence of length $(n+1)$: $$\left(x=f^0(x),f(x),\ldots, f^r(x),f(y_1),\ldots ,f(y_{N-r})\right)$$ where $r$ is the smallest iterate where
a previous element of the list is repeated, and $y_1<\ldots<y_{N-r}$ are the remaining elements (clearly $x$ and $f$ can be reconstructed from this list).\\
Thus there is a bijection
between\\
(1) sequences of length $n+1$, where the first element (no. 0) is drawn from $\{1,\ldots,n\}$, and the remaining elements are drawn without replacement from an urn with
configuration $c$, with first repeated value at ``time'' $r+1$ , and\\
(2) pairs $(x,f)$ where $x$ has rho-length $r$ under $f$.\\
Under this bijection, $\rho(x,f)>k$ iff $x_0\not\in\{ x_1,\ldots,x_k\}$ and $x_1,\ldots,x_k$ are pairwise distinct.

\end{proof}
In an equivalent form (a) and (b) were earlier shown by Hansen and Jaworski (\ci{HJ}). The asymptotic behaviour
of $Z$ resp. $\rho$ can now directly be read off from \autoref{thmasym}, which generalises the corresponding results for $Z$ and $\rho$ in \ci{AB}.

\subsection{Application to generic collision search in hash functions} $  $\\
``Real" hash functions $H$ have (in principle) an infinite domain, and a finite range  $\mathcal{R}$ of cardinality $|{\mathcal R}|=m=2^{\ell}$. However, restricted to an arbitrary finite input set they are concrete mappings with finite domain and range, and 
the theory given here applies.\\
If the attacker has no a priori knowledge about $H$ and is given only ``black box" access to $H$,
 the best
he can do is to try to find collisions 
using ``drawing without replacement", i.e. hashing randomly chosen (non-repeating) input strings.
In practice he has to restrict the possible input strings to a finite set $\mathcal{D}$ of cardinality $n$ (e.g. all input strings up to a certain maximal bitlength) and 
$h:=H|\mathcal{D}$ is a fixed $(n,m)$ function. How many strings does he have to hash to find a collision? We know
that $n/\sqrt{s_2}$ measures the effort for collision search.
If $n$ is large compared to $\sqrt{m}$ the effort will be of order $\sqrt{m}$ for practically every $(n,m)$ function (see \ref{prpcon}  and theorem \ref{thmasym},(2)). If $n$ is of order $\sqrt{m}$ in general at most a few collisions will exist.
 If they exist the trial of order $\sqrt{m}$ is needed to find one (thm \ref{thmasym}, (1)). Finally, if $n$ is small compared to $\sqrt{m}$ collisions 
are unlikely to exist (see the expected no. of collisions \eqref{expcoll}).\\ 
Thus - unless the design of $H$ is fundamentally flawed in the sense that the order of magnitude of $s_2(h)$ is larger than for a typical random $(n,m)$ function - 
the typical effort will be of order $\sqrt{m}$.
This is the basis for the  folklore belief, that generic collision search (for a well designed hash function with codomain size $m$) needs an effort of $\sqrt{m}$. (In the same vein, a 
generic $r-$collision search (small, fixed $r$) needs an effort of $m^{(r-1)/r}$).\\
The plausibility of the $\sqrt{m}$-effort here rests on two assumptions:
\begin{enumerate} 
\item the design of $H$ ensures that the order of magnitude of $s_2(H|\mathcal{D})$ is - for ``canonical"  (that is: easily specifiable, and not using specific properties of $H$)" preimage sets $\mathcal{D}$ of size $n$ - comparable to that of the $s_2$ of a random $(n,m)$ function
\item the attacker lacks the ability to specify a ``favourable" preimage set
\end{enumerate}
The first condition requires that $H$ is well designed from a statistical 
point of view, and the second condition requires that $H$ resists cryptanalysis.\\
The extent to which these conditions are fulfilled for a concrete hash function is debatable. 
If the attacker has a priori knowledge about $H$ he may of course find specific attacks. Especially he may  
in this case be able to find a set $\mathcal{A}$ of input strings s.th. a statistical collision attack on  $h:=H|{\mathcal A}$ is ``easy". 
One of the main aims of hash function design is to make it ``practically infeasible" for an attacker to determine such input sets. (Although is theoretically clear that such ``favourable" input sets
exist.)

\providecommand{\bysame}{\leavevmode\hbox to3em{\hrulefill}\thinspace}
\providecommand{\MR}{\relax\ifhmode\unskip\space\fi MR }
\providecommand{\MRhref}[2]{%
  \href{http://www.ams.org/mathscinet-getitem?mr=#1}{#2}
}
\providecommand{\href}[2]{#2}

\section{Appendix}

In this appendix we collect some inequalities.

\begin{lem}\label{lema1}
Let $X$ be binomial distributed with parameters $n$ and $p$.
For $r<n$
$$\PP(X<r)\geq (1-p^r)^{n \choose r}$$
\end{lem}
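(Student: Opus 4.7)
The plan is to interpret $X$ as a sum of independent Bernoullis and then to express the event $\{X<r\}$ as an intersection of decreasing events, so that Harris' positive-correlation inequality on the product space $\{0,1\}^n$ yields the claim.

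More precisely, let $Y_1,\dots,Y_n$ be i.i.d.\ Bernoulli$(p)$ variables with $X=Y_1+\cdots+Y_n$. For every $r$-subset $S\subseteq\{1,\dots,n\}$ define the event
\[
B_S:=\{\omega\,:\,Y_i(\omega)=1 \text{ for all } i\in S\},
\]
so that $\PP(B_S)=p^r$ and $\PP(B_S^c)=1-p^r$. Now the event $\{X<r\}$ (fewer than $r$ successes) is exactly the event that no $r$-subset has all its trials successful, i.e.\
\[
\{X<r\}=\bigcap_{|S|=r} B_S^c.
\]
Each complement $B_S^c$ is a \emph{decreasing} event in the coordinates $Y_1,\dots,Y_n$: flipping some $Y_i$ from $1$ to $0$ can only preserve $B_S^c$. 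So the plan is to invoke Harris' inequality (the special case of FKG for the product Bernoulli measure), which says that any finite family of decreasing events on $\{0,1\}^n$ is positively correlated. Applying this to the collection $\{B_S^c : |S|=r\}$ gives
\[
\PP(X<r)=\PP\Bigl(\bigcap_{|S|=r} B_S^c\Bigr)\;\geq\; \prod_{|S|=r}\PP(B_S^c)\;=\;(1-p^r)^{\binom{n}{r}}.
\]

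The only delicate point is the positive-correlation step. If one prefers not to quote Harris/FKG, the same bound can be proved directly by induction on the number of subsets, or, equivalently, by conditioning one variable at a time: for any decreasing events $C,D$ and any coordinate $Y_i$, one checks that $\PP(C\cap D\mid Y_i=y)$ is itself a decreasing function of $y$, so that $\PP(C\cap D)\geq \PP(C)\PP(D)$ follows by averaging, and the general statement is assembled by iterating. This is the step I expect to be the main technical obstacle to write out fully, but it is standard and well-documented; once it is accepted, the identification of $\{X<r\}$ as an intersection of decreasing events gives the result immediately.
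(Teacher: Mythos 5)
Your proof is correct, but it takes a genuinely different route from the paper's. You identify $\{X<r\}$ with the intersection, over all $r$-subsets $S$ of $\{1,\dots,n\}$, of the decreasing events $B_S^c$ (``not all trials in $S$ succeed''), each of probability $1-p^r$, and then invoke Harris' inequality for the product Bernoulli measure to lower-bound the probability of the intersection by the product of the probabilities; the identification of the event and the computation $\PP(B_S^c)=1-p^r$ are both right, and the iteration of Harris to finitely many decreasing events is standard since an intersection of decreasing events is again decreasing. The paper instead sets $f(p):=\log\PP(X<r)-\binom{n}{r}\log(1-p^r)$, computes $f'(p)$, and reduces its nonnegativity to the inequality $\PP(S_n\leq r-1)\geq\PP(S_{n-r}=0)\,\PP(S_r\leq r-1)$, which follows because independent samples realizing the right-hand event force at most $r-1$ successes in total; this is a self-contained calculus argument whose one combinatorial step is itself a small correlation inequality in disguise. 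Your approach buys conceptual transparency and an immediate generalization to independent but non-identically distributed Bernoulli trials (giving $\PP(X<r)\geq\prod_{|S|=r}\bigl(1-\prod_{i\in S}p_i\bigr)$), at the cost of importing Harris/FKG; the paper's approach stays elementary and needs no external correlation inequality. One small caveat: your sketched replacement for Harris (conditioning one coordinate at a time) is stated a bit loosely — the clean version is induction on $n$, using at the base case that two decreasing functions of a single Bernoulli variable are positively correlated — but since you are in any case appealing to a classical, well-documented result, this does not affect the validity of the argument.
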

\begin{proof} 
Let $f(p):=\log(\PP(X<r)) -{n \choose r}\log(1-p^r)$ the log of the quotient lhs/rhs. We find
$$f^\prime(p)=-r\,{n \choose r} \left(\frac{p^{r-1} (1-p)^{n-r}}{\PP(X<r)}-\frac{p^{r-1}}{1-p^r}\right)
 =r\,{n \choose r}p^{r-1} \left(\frac{\PP(X<r)-(1-p)^{n-r}(1-p^r)}{\PP(X<r)(1-p^r)}\right)$$
The numerator of the rhs is of the form $\PP(S_n\leq r-1)-\PP(S_{n-r}=0)\PP(S_r<r-1)$ and is therefore
nonnegative. Thus $f(p)\geq f(0):=0$ , i.e. the quotient is $\geq 1$.
\end{proof}

\begin{lem}\label{lema2}
For $s\geq 0$, $n>r$ 
$$-\log G_r(n,1-e^{-s})\leq {n \choose r} s^r$$
\end{lem}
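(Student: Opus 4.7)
The plan is to reduce this inequality to a single-variable statement via \autoref{lema1}, and then verify that single-variable statement by a derivative argument that cleanly exploits the factorization $1-a^r=(1-a)(1+a+\cdots+a^{r-1})$.

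First, I would apply \autoref{lema1} with $p=1-e^{-s}\in[0,1]$. Since $G_r(n,1-e^{-s})=\PP(X<r)$ for $X$ binomial with parameters $n$ and $1-e^{-s}$, \autoref{lema1} gives
\[
G_r(n,1-e^{-s}) \;\geq\; \bigl(1-(1-e^{-s})^r\bigr)^{{n \choose r}}.
\]
Taking $-\log$ reduces the desired estimate to the one-variable inequality
\[
-\log\bigl(1-(1-e^{-s})^r\bigr) \;\leq\; s^r \qquad (s\geq 0).
\]

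Next, I would prove this by setting $g(s):=s^r+\log\bigl(1-(1-e^{-s})^r\bigr)$ and showing $g(0)=0$ together with $g'(s)\geq 0$. Using the factorization $1-a^r=(1-a)\sum_{i=0}^{r-1}a^i$ with $a=1-e^{-s}$ (so $1-a=e^{-s}$), we obtain
\[
1-(1-e^{-s})^r \;=\; e^{-s}\sum_{i=0}^{r-1}(1-e^{-s})^i,
\]
which simplifies the logarithmic derivative:
\[
g'(s) \;=\; r s^{r-1} \;-\; r\,\frac{(1-e^{-s})^{r-1}}{\displaystyle\sum_{i=0}^{r-1}(1-e^{-s})^i}.
\]
Since the geometric sum in the denominator is $\geq 1$ (its zeroth term), and since $1-e^{-s}\leq s$ for $s\geq 0$, the subtracted fraction is bounded above by $(1-e^{-s})^{r-1}\leq s^{r-1}$. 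Hence $g'(s)\geq 0$, so $g(s)\geq g(0)=0$, which is exactly the reduced inequality. Combining this with the first step completes the proof.

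I do not foresee a real obstacle here: the reduction to a one-variable inequality is immediate from \autoref{lema1}, and the one-variable inequality yields to the very crude estimate $1-e^{-s}\leq s$ once the factorization $1-a^r=(1-a)\sum a^i$ is used to avoid the ugly direct derivative of $\log(1-(1-e^{-s})^r)$. The only point to double-check is that $\binom{n}{r}\log(1-(1-e^{-s})^r)$ is well-defined and the sign handling for taking $-\log$ of a quantity in $[0,1]$ is correct, which is clear for $s\geq 0$ and $n>r$.
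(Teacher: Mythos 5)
Your proof is correct, but it follows a genuinely different route from the paper's. The paper proves the lemma directly and independently of \autoref{lema1}: it sets $f(s):=-\log G_r(n,1-e^{-s})-{n \choose r}s^r$, computes $f^\prime(s)=r{n \choose r}\left((e^s-1)^{r-1}/p_r(n,e^s-1)-s^{r-1}\right)$, and concludes $f^\prime\leq 0$ from the one-line bound $p_r(n,e^s-1)\geq p_r(r-1,e^s-1)=e^{(r-1)s}$ together with $1-e^{-s}\leq s$. You instead first invoke \autoref{lema1} with $p=1-e^{-s}$ to strip the $n$-dependence down to the exponent ${n \choose r}$, leaving the clean one-variable inequality $-\log\left(1-(1-e^{-s})^r\right)\leq s^r$, which you then settle by the same kind of monotonicity argument, with the factorization $1-a^r=(1-a)\sum_{i=0}^{r-1}a^i$ playing the role that the bound on $p_r$ plays in the paper. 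All your steps check out: $G_r(n,p)=\PP(X<r)$ for $X$ binomial, positivity of both sides before taking logarithms, the derivative formula $g^\prime(s)=rs^{r-1}-r(1-e^{-s})^{r-1}/\sum_{i=0}^{r-1}(1-e^{-s})^i$, and the two bounds $\sum_{i=0}^{r-1}(1-e^{-s})^i\geq 1$ and $1-e^{-s}\leq s$. What your version buys is modularity and transparency: it reuses \autoref{lema1} and makes explicit the chain $G_r(n,1-e^{-s})\geq\left(1-(1-e^{-s})^r\right)^{n \choose r}\geq e^{-{n \choose r}s^r}$, tying the two appendix lemmas together; what the paper's version buys is self-containedness, keeping the appendix lemmas logically independent of one another. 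Both are equally short.
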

\begin{proof}
Let $f(s):=-\log(G_r(n,1-e^{-s})) -{n \choose r} s^r$. We find
$$f^\prime(s)=r\,{n \choose r}\left(\frac{(e^s-1)^{r-1}}{p_r(n,e^s-1)} -s^{r-1}\right)$$
Clearly $p_r(n,e^s-1)\geq p_r(r-1,e^s-1)=e^{(r-1)s}$. Thus 
$$f^\prime(s)\leq (1-e^{-s})^{r-1} -s^{r-1}\leq 0$$
Thus $f(s)\leq f(0):=0$.
\end{proof}

\begin{lem}\label{lema3}
Let $F_r(s):= q_r(s)e^{-s}$.
Then for $s>0$
$$-\log(F_r(s))\leq \frac{s^r}{r!}$$
\end{lem}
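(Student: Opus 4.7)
My plan is to rewrite the inequality in exponentiated form, namely
\[
 q_r(s)\,e^{-s}\;\geq\;e^{-s^r/r!}\qquad (s\geq 0),
\]
and to establish it by showing that the function
\[
 g(s):=\log q_r(s)-s+\frac{s^r}{r!}
\]
is nonnegative on $[0,\infty)$ by a one-variable monotonicity argument. Since $g(0)=\log q_r(0)=\log 1=0$, it suffices to prove $g'(s)\geq 0$ for $s\geq 0$.

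The key ingredient is the identity
\[
 q_r'(s)=\sum_{i=1}^{r-1}\frac{s^{i-1}}{(i-1)!}=q_r(s)-\frac{s^{r-1}}{(r-1)!},
\]
from which
\[
 \frac{q_r'(s)}{q_r(s)}=1-\frac{s^{r-1}/(r-1)!}{q_r(s)}.
\]
Substituting into $g'(s)=q_r'(s)/q_r(s)-1+s^{r-1}/(r-1)!$ yields the very clean factorisation
\[
 g'(s)=\frac{s^{r-1}}{(r-1)!}\cdot\frac{q_r(s)-1}{q_r(s)}.
\]
Now the three factors on the right are manifestly nonnegative for $s\geq 0$: the prefactor $s^{r-1}/(r-1)!$ is nonnegative, $q_r(s)\geq 1$ for $s\geq 0$ because every term of the defining sum is nonnegative and the $i=0$ term equals $1$, and $q_r(s)-1=s+s^2/2+\dots\geq 0$ for the same reason. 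Hence $g'\geq 0$ on $[0,\infty)$, so $g(s)\geq g(0)=0$, which is precisely the claim.

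There is no serious obstacle here; the only thing to get right is the differentiation identity for $q_r$, which causes the ``$-\log q_r$'' term to combine with the $s^r/r!$ term in just the right way to produce a nonnegative derivative. It is worth noting that the inequality is sharp at $s=0$ (both sides and both derivatives vanish to the same order $r$), which is consistent with $g'(s)\sim s^{2r-1}/((r-1)!\,r!)$ as $s\downarrow 0$, and that the same method would also yield the strict inequality $-\log F_r(s)<s^r/r!$ for $s>0$ if desired, since $g'(s)>0$ for $s>0$.
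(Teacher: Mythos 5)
Your proof is correct and is essentially the paper's own argument: the paper proves this lemma ``similarly as above,'' i.e.\ by exactly this monotonicity argument applied to $f(s)=-g(s)=-\log F_r(s)-s^r/r!$, using the identity $q_r'(s)=q_r(s)-s^{r-1}/(r-1)!$ to obtain a derivative of definite sign. One small slip in your closing aside: since $q_r(s)-1\sim s$ as $s\downarrow 0$, your factorisation gives $g'(s)\sim s^{r}/(r-1)!$, not $s^{2r-1}/((r-1)!\,r!)$ (equivalently $g(s)\sim s^{r+1}/((r+1)!\,/\,r)$ rather than order $2r$); this does not affect the validity of the proof.
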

\begin{proof} Similar as above. \end{proof}

\begin{lem}\label{lema4}
Let $d:=\sum_{i=1}^m x_i{x_i \choose r}$ and $s_r:=\sum_{i=1}^m {x_i \choose r}$.
Then for $t>0$ 
$$ \prod_{i=1}^m G_r(x_i,1-e^{-t})\leq \left(e^{-t}q_r(\frac{d}{s_r}t)\right)^{rs_r^{r+1}/d^r}$$ 
\end{lem}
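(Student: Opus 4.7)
The idea is to reduce each factor $G_r(x_i,1-e^{-t})$ to a factor of $F_r$ by stochastic domination, and then to combine the resulting product using the convexity of $\psi := -\log F_r$ together with Jensen's inequality. For the first step, interpret $G_r(x,1-e^{-t}) = \PP(T_x^{(r)}>t)$ as the survival function of the $r$-th order statistic of $x$ i.i.d.\ unit exponentials $E_1,\ldots,E_x$. Using the standard representation $T_x^{(r)}\stackrel{d}{=}\sum_{k=1}^r E_k^*/(x-k+1)$ with $E_k^*$ i.i.d.\ Exp$(1)$, and the trivial inequality $1/(x-k+1)\leq 1/(x-r+1)$ for $k\leq r$, one obtains $T_x^{(r)}\leq \Gamma_r/(x-r+1)$ almost surely (with $\Gamma_r\sim\mathrm{Gamma}(r,1)$). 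Hence
\[
G_r(x,1-e^{-t})\leq F_r((x-r+1)t),
\]
and, noting $G_r(x,\cdot)=1$ for $x<r$,
\[
\prod_{i=1}^m G_r(x_i,1-e^{-t})\leq \prod_{i\in M_r} F_r((x_i-r+1)t),
\]
with $M_r:=\{i:x_i\geq r\}$.

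For the second step, write $\alpha:=d/s_r$ and $N:=rs_r^{r+1}/d^r = rs_r/\alpha^r$. Using $F_r(s)=e^{-s}q_r(s)$, the RHS of the lemma equals $F_r(\alpha t)^N e^{N(\alpha-1)t}$, so after taking logarithms the target reduces to
\[
\sum_{i\in M_r}\psi((x_i-r+1)t)\geq N\psi(\alpha t)-N(\alpha-1)t.
\]
I verify that $\psi$ is convex on $[0,\infty)$ with $\psi(0)=\psi'(0)=0$ by computing $\psi'(s)=s^{r-1}/[(r-1)!\,q_r(s)]$ and reducing $\psi''>0$ to the elementary positivity $(r-1)q_r(s)-sq_{r-1}(s)=(r-1)+\sum_{j=1}^{r-2}\tfrac{r-1-j}{j!}s^j>0$. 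Then I apply Jensen's inequality with weights $c_i:=\binom{x_i}{r}/s_r$ (summing to $1$, with weighted mean $\sum c_i x_i=\alpha$) to obtain
\[
\sum_{i\in M_r}\binom{x_i}{r}\psi((x_i-r+1)t)\geq s_r\,\psi((\alpha-r+1)t).
\]

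\emph{Main obstacle.} Closing the gap between this weighted Jensen bound and the unweighted sum required, with the sharp constant $N=rs_r/\alpha^r$, is the most delicate step. The power-$r$ structure of $N$ suggests a H\"older step with conjugate exponents $r$ and $r/(r-1)$; the negative correction $-N(\alpha-1)t\leq 0$ (which holds because $\alpha\geq r$) is expected to provide the slack needed to absorb the shift from $(\alpha-r+1)t$ to $\alpha t$ in the argument of $\psi$. Should a direct H\"older step not close the argument cleanly, an alternative is a derivative comparison: both sides of the reformulated inequality vanish to order $r$ at $t=0$, and the derivative of the LHS, $\sum_{i\in M_r}(x_i-r+1)\psi'((x_i-r+1)t)$, can be compared to the derivative of the RHS, $N\alpha\psi'(\alpha t)-N(\alpha-1)$, via a further convexity argument exploiting $\psi'(s)=s^{r-1}/[(r-1)!\,q_r(s)]$.
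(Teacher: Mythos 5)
Your first step --- the R\'enyi representation giving $G_r(x,1-e^{-t})=\PP(T^{(r)}_x>t)\le \PP(\Gamma_r>(x-r+1)t)=F_r((x-r+1)t)$ --- is correct, and so is your verification that $\psi=-\log F_r$ is convex with $\psi'(s)=s^{r-1}/[(r-1)!\,q_r(s)]$. But the proposal is not a proof: the step you yourself label the ``main obstacle'' is exactly where the argument is missing, and it is not a routine gap. After your reduction you must establish the \emph{unweighted} bound $\sum_{i\in M_r}\psi((x_i-r+1)t)\ge N\psi(\alpha t)-N(\alpha-1)t$ with the sharp constant $N=rs_r^{r+1}/d^r$, whereas Jensen with the weights $\binom{x_i}{r}/s_r$ only controls the \emph{weighted} sum $\sum_i\binom{x_i}{r}\psi((x_i-r+1)t)$. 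Since the weights $\binom{x_i}{r}$ can be arbitrarily large on a few indices, neither sum controls the other, and neither of your two suggested repairs (a H\"older step, or a derivative comparison) is carried out. (Also, your remark that both sides of the reformulated inequality vanish to order $r$ at $t=0$ is not right: the right-hand side contains the nonzero linear term $-N(\alpha-1)t$.) As it stands this is a plan with an unproven core inequality, not a proof.

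The structural point you are missing is that the weights $\binom{x_i}{r}$ enter naturally only \emph{after} differentiation. The paper sets $f(t):=\sum_i\log G_r(x_i,1-e^{-t})$ and computes
$$f'(t)=-r\sum_{i=1}^m\binom{x_i}{r}\frac{(e^t-1)^{r-1}}{p_r(x_i,e^t-1)}\le -r\,t^{r-1}\sum_{i=1}^m\binom{x_i}{r}\frac{1}{q_r(x_it)},$$
so each summand automatically carries the factor $\binom{x_i}{r}$; applying Jensen to the convex function $s\mapsto 1/q_r(s)$ with precisely the weights $\binom{x_i}{r}/s_r$ gives $f'(t)\le -rs_rt^{r-1}/q_r(\tfrac{d}{s_r}t)\le \tfrac{rs_r^{r+1}}{d^r}\bigl(\log F_r(\tfrac{d}{s_r}t)\bigr)'$, and integrating from $0$ (where both sides vanish) yields $\prod_iG_r(x_i,1-e^{-t})\le F_r(\tfrac{d}{s_r}t)^{rs_r^{r+1}/d^r}$, which implies the stated bound because $d/s_r\ge r\ge 1$. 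In other words, the weighted Jensen inequality is exactly the right tool once you work with logarithmic derivatives; by applying Jensen to $\psi$ itself you create a weighted/unweighted mismatch that you then cannot undo. If you want to salvage your route you must either prove your reformulated inequality directly, or switch to a derivative comparison --- but the latter, done properly, essentially reproduces the paper's argument, and it is cleaner to run it on $\log G_r$ (where the identity for the derivative is exact) than on the weaker bound $F_r((x_i-r+1)t)$.
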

\begin{proof}
Let  
$ f(t):=\sum_{i=1}^m \log(G_r(x_m,1-e^{-t})$. We find
$$f^\prime(t)=-\sum_{i=1}^m r {x_i \choose r}\frac{ (e^t-1)^{r-1}}{p_r(x_i,e^t-1)}\leq -\sum_{i=1}^m r {x_i \choose r}\frac{t^{r-1}}{q_r(x_it)} $$
Since $t\mapsto 1/q_r(t)$ is convex on $\mathbb{R}_+$ Jensen's inequality gives
$\sum_{i=1}^m {x_i \choose r} \frac{1}{q_r(x_it)}\geq \frac{s_r}{q_r(\frac{d}{s_r}t)}$ and so
$$f^\prime(t)\leq -\frac{r s_r t^{r-1}}{q_r(\frac{d}{s_r}t)}= \frac{r s_r^{r+1}}{d^r}(\log F_r(\frac{d}{s_r}t))^\prime$$
where $F_r$ is as in lemma 8.3. Thus the log of the quotient lhs/rhs is decreasing in $t$. 
\end{proof}
\end{document}